\declaretheorem{theorem}
\declaretheorem[sibling=theorem]{lemma}
\declaretheorem[sibling=theorem]{corollary}
\declaretheorem{claim}
\newtheorem*{theorem*}{Theorem}
\declaretheoremstyle[
        spaceabove=\topsep, 
        spacebelow=\topsep, 
        bodyfont=\normalfont,
        notefont=\normalfont\bfseries,
        notebraces={}{},
        qed=$\blacksquare$, 
    ]{proofstyle}
\declaretheorem[style=proofstyle,numbered=no,name=Proof]{proof}
    \let\Cref\crtCref
    \let\cref\crtcref
\crefname{claim}{Claim}{Claims}
\newcommand{\reals}{\mathbb{R}}
\newcommand{\spn}{\mathrm{span}}
\DeclareMathOperator*{\spnone}{span^{1}}
\newcommand{\W}{\mathcal{W}}
\newcommand{\E}{\mathop{\mathbb{E}}}
\newcommand{\dotp}{\boldsymbol{\cdot}}
\DeclareMathOperator*{\argmax}{\arg\max}
\newcommand{\vbar}{\bar{v}}
\newcommand{\betabar}{\bar{\beta}}
\renewcommand{\epsilon}{\varepsilon}
\newcommand{\R}{\mathbb{R}}
\newcommand{\N}{\mathbb{N}}
\newcommand{\cS}{\mathcal{S}}
\newcommand{\cI}{\mathcal{I}(S)}
\newcommand{\cIt}[1]{\mathcal{I}_{#1}(S)}
\newcommand{\cIminus}{\mathcal{I}^-}
\newcommand{\cW}{\mathcal{W}}
\newcommand{\f}[1]{f_{\scriptscriptstyle {(#1)}}}
\newcommand{\Z}{\mathcal{Z}}
\newcommand{\discolorlinks}[1]{{\hypersetup{hidelinks}#1}}
\newcommand{\ignore}[1]{}
\newcommand{\cX}{\mathcal{X}}
\newcommand{\cO}{\mathcal{O}}
\newcommand{\construct}[1]{\f{\discolorlinks{\ref{#1}}}}
\newcommand{\ffull}{\construct{eq:construction}}
\newcommand{\fsimple}{\construct{eq:construction_simple}}
\newcommand{\grad}{\nabla}
\title{Never Go Full Batch \\ (in Stochastic Convex Optimization)}
\author{
    Idan Amir\thanks{Department of Electrical Engineering, Tel Aviv University; \texttt{idanamir@mail.tau.ac.il}.} 
    \and Yair Carmon\thanks{Blavatnik School of Computer Science, Tel Aviv University; \texttt{ycarmon@tauex.tau.ac.il }.}
    \and Tomer Koren\thanks{Blavatnik School of Computer Science, Tel Aviv University and Google Research; \texttt{tkoren@tauex.tau.ac.il}.} 
    \and Roi Livni\thanks{Department of Electrical Engineering, Tel Aviv University; \texttt{rlivni@tauex.tau.ac.il}.} 
}
\date{\today}
\begin{document}
\maketitle

\begin{abstract}
    %
    We study the generalization performance of \emph{full-batch} optimization algorithms for stochastic convex optimization: these are first-order methods that only access the exact gradient of the empirical risk (rather than gradients with respect to individual data points), that include a wide range of algorithms such as gradient descent, mirror descent, and their regularized and/or accelerated variants.
    We provide a new separation result showing that, while algorithms such as stochastic gradient descent can generalize and optimize the population risk to within $\epsilon$ after $O(1/\epsilon^2)$ iterations, full-batch methods either need at least $\Omega(1/\epsilon^4)$ iterations or exhibit a dimension-dependent sample complexity.
    %
\end{abstract}


\section{Introduction}
\label{sec:intro}
Stochastic Convex Optimization (SCO) is a fundamental problem that received considerable attention from the machine learning community in recent years \citep{shalev2009stochastic,feldman2016generalization,bassily2020stability,dauber2020can,amir21gd}. In this problem, we assume a learner that is provided with a finite sample of convex functions drawn i.i.d.~from an unknown distribution. The learner's goal is to minimize the expected function. Owing to its simplicity, it serves as an almost ideal theoretical model for studying generalization properties of optimization algorithms ubiquitous in practice, particularly \emph{first-order methods} which utilize only first derivatives of the loss rather than higher-order ones.

One prominent approach for SCO---and learning more broadly---is to consider the \emph{empirical risk} (the average objective over the sample) and apply a first-order optimization algorithm to minimize it. The problem of learning is then decoupled into controlling the optimization error over the empirical risk (training error) and bounding the difference between the empirical error and the expected error (generalization error).

In convex optimization, the convergence of different first-order methods has been researched extensively for many years~(e.g., \cite{nesterov2003introductory,nemirovski1983problem,bottou2018optimization}), and we currently have a very good understanding of this setting in terms of upper as well lower bounds on worst-case complexity.
However, in SCO where the generalization error must also be taken into account, our understanding is still lacking. In fact, this is one of the few theoretical learning models where the optimization method affects not only the optimization error but also the generalization error (distinctively from models such as PAC learning and generalized linear models). In particular, it has been shown~\citep{shalev2009stochastic,feldman2016generalization} that some minima of the empirical risk may obtain large generalization error, while other minima have a vanishingly small generalization error. To put differently, learning in SCO is not only a question of minimizing the empirical risk, but also a question of \emph{how} one minimizes it. However, the results of~\citep{shalev2009stochastic,feldman2016generalization} leave open the question of whether concrete optimization also have different generalization properties.

Towards better understanding, \citet{amir21gd} recently studied the generalization properties of full-batch gradient descent (GD), where each step is taken with respect to the gradient of the empirical risk. For GD (and a regularized variant thereof), they gave a lower bound on the generalization error as a function of iteration number, which is strictly larger than the well-known optimal rate obtained by stochastic gradient descent (SGD), where each step is taken with respect to the gradient at a sampled example. Notably, the lower bound of \citep{amir21gd} precisely matches the dimension-independent stability-based upper bound  recently shown for full-batch GD by \citet{bassily2020stability}. The separation between full-batch GD and SGD is the first evidence that not only abstract Empirical Risk Minimizers may fail to generalize in SCO, but in fact also basic methods such as GD could be prone to such overfitting. 
%
A natural question is, then, whether overfitting is inherent to \emph{full-batch} algorithms, that minimize the objective only through access to the exact empirical risk, or whether this suboptimality can be remedied by adding regularization, noise, smoothing, or any other mechanism for improving the generalization of GD.

In this work we present and analyze a model of \emph{full-batch} optimization algorithms for SCO. Namely, we focus on algorithms that access the empirical risk only via a first-order oracle that computes the \emph{exact} (full-batch) gradient of the empirical loss, rather than directly accessing gradients with respect to individual samples.
Our main result provides a negative answer to the question above by significantly generalizing and extending the result of \citet{amir21gd}: we show that \emph{any} optimization method that uses full-batch gradients needs at least $\Omega(1/\epsilon^4)$ iterations to minimize the expected loss to within $\epsilon$ error. This is in contrast with the empirical loss, which can be minimized with only $O(1/\epsilon^2)$ steps.

Comparing SGD and GD in terms of the sample size $n$, we see that SGD converges to an optimal generalization error of $O(1/\sqrt{n})$ after $O(n)$ iterations, whereas a full-batch method must perform $\Omega(n^2)$ iterations to achieve the same $O(1/\sqrt{n})$ test error.  We emphasize that we account here for the \emph{oracle complexity}, which coincides with the iteration complexity in the case of gradient methods. In terms of individual gradients calculations, while SGD uses at most $O(n)$ gradient calculations (one sample per iteration), a full-batch method will perform $\Omega(n^3)$ calculations ($n$ samples per iteration).

The above result is applicable to a wide family of full-batch learning algorithms: regularized GD (with any \emph{data-independent} regularization function), noisy GD, GD with line-search or adaptive step sizes, GD with momentum, proximal methods, coordinate methods, and many more. Taken together with upper bound of \citet{bassily2020stability}, we obtain a sharp rate of $\Theta(1/\epsilon^4)$ for the generalization-complexity of full-batch methods. Surprisingly, this rate is achieved by standard GD (with an unusual step-size choice of $\eta = \Theta(\epsilon^3)$), and it cannot be improved by adding regularization of any sort, nor by adding noise or any other form of implicit/explicit bias.

\subsection{Related work}

This work extends and generalizes the results of \citet{amir21gd} who proved generalization lower bounds for GD (and a specific instance of regularized GD). Our work shows that in fact \emph{any} full-batch method will suffer from similar lower bounds. Our construction builds upon the one used in \cite{amir21gd}, which in turn builds upon previous constructions~\citep{bassily2020stability, shalev2009stochastic}. However, our arguments and proofs here are more challenging, as we need to reason about a general family of algorithms, and not about a specific algorithm whose trajectory can be analyzed directly.
Our developments also build on ideas from the literature on oracle complexity lower bounds in optimization~\citep{nemirovski1983problem,nesterov2003introductory,woodworth2016tight,carmon2019lower,diakonikolas2019lower,carmon2020acceleration}. In particular, we first prove our result in the simplified setting of algorithms constrained to the span of observed gradients~\citep{nemirovski1983problem,nesterov2003introductory} and subsequently lift it to general algorithms using a random high-dimensional embedding technique proposed by \citet{woodworth2016tight} and later refined in~\cite{carmon2019lower,diakonikolas2019lower}. However, while these works lower bound what we call the empirical risk, we lower bound the generalization error. This requires us to develop a somewhat different argument for how the span of the gradients evolve during the optimization: in prior work, the algorithm learns the component of the solution coordinate by coordinate, whereas in our work the true (generalizing) solution is present in the observed gradients from the first query, but spurious sampling artifacts drown it out.

Empirical studies (outside of the scope of SCO) support the claim that generalization capabilities degrade with the increase of the batch size.  Specifically, \citet{ZhuWYWM19} indicates that SGD outperforms GD in terms of generalization. The works of \citet{KeskarMNST17} and \citet{HofferHS17} exhibit a similar phenomenon in which small-batch SGD generalizes better than large-batch SGD with the same iteration budget. We provide the first \emph{theoretical} evidence for this phenomenon for \emph{convex} losses. Several theoretical studies explore the convergence of stochastic methods that use mini-batches~\citep{cotter2011better, li2014efficient, woodworth2018graph}. Note that this setting differs from ours, as they assume access to mini-batches sampled \emph{without replacement} whereas full-batch means we reuse the same (full) batch with each gradient step.
The work of \citet{wu2020noisy} also explores the separation between GD and SGD and interprets mini-batch SGD as a noisy version of GD. They propose a modified algorithm with noise injected to the full-batch gradients. Interestingly, the noise production requires access to the sample-points. Our work shows that (in SCO) this is unavoidable: namely, no data-independent noise can be used to improve generalization. 

Several other works study the generalization performance of GD \citep{soudry2018implicit, gunasekar2018a-implicit, ji2019implicit, li2019implicit}. The work of \citet{soudry2018implicit}, for example, examines GD on unregularized logistic regression problems. They show that, in the limit, GD converges to a well-generalizing solution by arguing about the bias of the algorithm. Interestingly, both our and their results require slow-training, beyond what is required for empirical error optimization.
Another work that highlights the slow convergence of GD is that of \citet{bassily2020stability}. They were the first to address uniform stability of (non-smooth) GD and SGD, and provided tight bounds. Stability entails generalization, hence our results lead to stability lower bounds for any full-batch method. Consequently, we extend the lower bounds for GD in the work of \citet{bassily2020stability} to a wider class. It might be thought that the instability argument of \citet{bassily2020stability} can be used to obtain similar generalization lower bounds---however, we note that their techniques also prove instability of SGD (which does generalize). Hence, instability does not immediately imply, in this setting, lack of generalization.


Finally, we note that under smoothness and strong convexity, it is well known that improved rates can be obtained. Specifically, using the stability bound of \citet{bousquet2002stability} one can show that we can achieve generalization error of $O(1/\sqrt{n})$ after $O(n)$ iterations if the population risk is $O(1)$-strongly convex. The arguments of \citet{hardt2016train} imply generalization bound to instances where every sample risk is $O(\sqrt{n})$ smooth. Our result implies that, even though these special families of functions enjoy appealing learning rates, in general it is impossible to obtain better rates by strong-convexifying or smoothing problem instances via first-order full-batch oracle queries.


\section{Problem Setup and Main Results}
\label{sec:setup}

We study the standard setting of stochastic convex optimization. In this setting, a learning problem is specified by a fixed domain $\W \subseteq \reals^d$ in $d$-dimensional Euclidean space, and a loss function $f:\W \times \Z \to \reals$, which is both convex and $L$-Lipschitz with respect to its first argument (that is, for any $z\in \Z$ the function $f(w;z)$ is $L$-Lipschitz and convex with respect to $w$). In particular, throughout the paper, our construction consists of $1$-Lipschitz functions and we will focus on a fixed domain $\W$ defined to be the unit Euclidean ball in $\reals^d$, namely $\W = \brk[c]{w: \|w\|_2\le 1}$.

We also assume that there exists an unknown distribution $D$ over parameters $z$ and the goal of the learner is to optimize the \emph{true risk} (or \emph{true loss}, or \emph{population risk}) defined as follows: 
\begin{equation}\label{eq:true} 
    F(w) 
    \coloneqq
    \E_{z\sim D}[f(w;z)]
    ,
\end{equation}
We assume that a sample $S=\{z_1,\ldots, z_n\}$ is drawn from the distribution $D$, and the learner has to output $w_S\in \W$ (the exact access the learner has to the sample, and how $w_S$ may depend on $S$ is discussed below). We require  the solution to be $\epsilon$-optimal in expectation for some parameter $\epsilon>0$, i.e.,
\[\E_{S\sim D^n}[F(w_S)] - \min_{w^\star\in \W}F(w^\star)\le \epsilon.\]

As discussed, the standard setting assumes that the learner has direct access to the i.i.d.~sample, as well as to the gradients of the loss function (i.e., a first-order oracle). In this work, though, we focus on a specific family  \emph{full-batch} methods. 
Hence, the optimization process is described as follows: First, an i.i.d.~sample $S=\brk{z_1,\ldots,z_n}$ is drawn from $D$. Then, the learner is provided with access only to the \emph{empirical risk} via a \emph{full-batch first-order oracle} which we define next.

\paragraph{Full-batch first-order oracle.}

Consider a \emph{fixed} sample $S=(z_1,\ldots,z_n)$ of size $n$, drawn i.i.d.\ from $D$. 
The \emph{empirical risk} over the sample $S$ is
\begin{align*}
    F_S(w)
    =
    \frac{1}{n}\sum_{i=1}^nf(w;z_i)
    .
\end{align*}
Then, a \emph{full-batch first-order oracle} is a procedure that, given input $w\in\W$, outputs 
\begin{align*}
    \cO(w): =(\nabla F_S(w); F(w)).
\end{align*}
where $\nabla F_S(w)$ is an empirical risk sub-gradient of the form
\begin{align}\label{eq:full-batch-grad}
    \nabla F_S(w)
    =
    \frac{1}{n}\sum_{i=1}^n\nabla f(w;z_i)
    ,
\end{align}
and each sub-gradient $\nabla f(w,z_i)$ is computed by the oracle as a function of $w$ and $z_i$ (that is, independently of $z_j$ for $j\ne i$). 

We emphasize that the sample is fixed throughout the optimization, so that the oracle computes the gradient of the \emph{same} empirical risk function at every call, hence the name \emph{full-batch}.
Note that the subgradient with respect to a single data point, i.e., $\nabla f(w;z_i)$, is not accessible through this oracle, which only returns the average gradient over the sample $S$. 

Notice that our definition above is slightly narrower than a general sub-gradient oracle for the empirical risk due to the requirement that the sub-gradients $\nabla f(w,z_i)$ are chosen independently of $z_j$ for $j\ne i$ – since we provide here with a lower bound, this restriction strengthens our result. We make this restriction to avoid some degenerate constructions (that in fact
can even be used to fail SGD if the gradient at $z_i$ may depend on the whole sample), which are of no practical implications.

\paragraph{Full-batch first-order algorithm.}

A \emph{full-batch} (first-order) method is naturally defined as any algorithm that has access to the optimization objective---namely the empirical risk $F_S$---only via the full-batch first order oracle. In particular, if $w_t$ is the $t$'th query of the algorithm to the full-batch oracle then $w_t$ has to be of the form
\begin{equation}\label{eq:alg-structure}
    w_t = Q_t(\cO(w_0), \ldots, \cO(w_{t-1})),
\end{equation}
where $Q_t: (\R^{d+1})^t \to \W$ is a fixed (possibly randomized) mapping. At the end of the process the algorithm outputs $w_S$. We study the algorithm's oracle complexity, which is the number of iterations $T$ the algorithm performs before halting.
Therefore, we assume without loss of generality that $w_S=w_T$, i.e., the algorithm's output is its $T$'th query.

\subsection{Main result}
\label{sec:result}

In this section we establish our main result, which provides a generalization lower-bound for full-batch first order algorithms. The complete proof is provided in \cref{prf:main}.

\begin{theorem} \label{thm:main}
Let $\epsilon>0$ and $n,T\in \N$; there exists $d = \textrm{poly}(2^{n},T,1/\epsilon)$ such that the following holds. For any full-batch first-order algorithm with oracle complexity at most $T$, there exists a 1-Lipschitz convex function $f(w;z)$ in $\W$, the unit-ball in $\reals^d$, and a distribution $D$ over $\Z$ such that, for some universal constant $c>0$: 
\begin{align}\label{eq:lower-bound}
    \E_{S\sim D^n} [F(w_S)]
    \ge 
    \min_{w^\star\in \W}F(w^\star) 
    + \epsilon 
    + \Omega\brk!{\min\set!{1-c \epsilon^2\sqrt{T},0}}
    .
\end{align}
\end{theorem}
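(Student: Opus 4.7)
The plan is to follow the two-stage lower-bound strategy standard in oracle complexity theory: first establish the bound for a restricted class of algorithms whose queries lie in the span of observed gradients (following \citet{nemirovski1983problem}), and then lift it to arbitrary full-batch algorithms using a random high-dimensional embedding \`a la \citet{woodworth2016tight}. The function $f(w;z)$ will be built on top of the constructions of \citet{amir21gd,bassily2020stability,shalev2009stochastic}: roughly, a max-type $1$-Lipschitz convex function that pits a ``true'' direction---whose component in the loss requires a large number of averaged gradient observations to resolve---against a collection of sample-indicator directions whose coefficients in $\nabla F_S$ are of order $1/\sqrt{n}$ due to sampling fluctuations. The sample-dependent directions create a set of spurious empirical minimizers with large population risk, while a generalizing solution exists in $\W$ but is occluded in any single full-batch gradient by these sampling artifacts.

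Concretely, I would first define a \emph{span-restricted} variant of \cref{eq:alg-structure} in which $w_t$ must lie in $\mathrm{span}\brk[c]{\nabla F_S(w_0),\ldots,\nabla F_S(w_{t-1})}$, and prove the theorem for this class. The key lemma to isolate is an inductive statement describing the evolution of this span with respect to the sample structure: starting from $w_0=0$, each new full-batch gradient adds at most a bounded ``budget'' of new sample-specific directions to the span, because of the $1/n$ averaging in \cref{eq:full-batch-grad}. I would then show that any $w_T$ in this span must either (a) place negligible weight on the generalizing direction (yielding $F(w_T)-F(w^\star)\ge\epsilon$ directly), or (b) collect enough spurious mass from unseen sample indicators to incur population loss $\Omega\brk{\min\set{1-c\epsilon^2\sqrt{T},0}}$, matching the additive term in \cref{eq:lower-bound}. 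The quantitative heart of this step is a concentration / anti-concentration argument showing that the span of the first $T$ gradients has dimension much smaller than the number of samples it can ``represent'' through the full-batch oracle.

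To lift the result to general algorithms of the form \cref{eq:alg-structure}, I would embed the span-restricted construction in a much higher-dimensional ball $\W\subset\reals^d$ with $d=\mathrm{poly}(2^n,T,1/\epsilon)$, apply a random orthogonal rotation $U$ to both the loss and the distribution, and invoke the orthogonalization arguments of \citet{woodworth2016tight,carmon2019lower}. With high probability over $U$, the components of any query $w_t$ outside the span of the previously observed gradients are essentially orthogonal to all sample-specific directions, so the oracle reveals no useful information about them; this effectively reduces a general algorithm to a span-restricted one and transfers the lower bound. Taking expectation over both $S$ and $U$ and derandomizing via Yao's principle yields a deterministic hard instance.

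The main obstacle, as flagged in the paper, is exactly the point that distinguishes this from prior oracle lower bounds: the generalizing optimum $w^\star$ is \emph{already present} in $\nabla F_S(w_0)$---there is no coordinate-by-coordinate unveiling to exploit. The argument must therefore track the ``signal-to-noise'' evolution of the gradient span rather than its dimension: spurious sample-indicator coordinates appear in the gradients with magnitude $\Theta(1/\sqrt{n})$, comparable to or larger than the true signal after projection, and any span-restricted iterate inherits this noise. Quantifying how quickly these spurious components saturate $w_t$ and force its population risk up is the technically delicate part, and is what ultimately produces the $\epsilon^2\sqrt{T}$ scaling in \cref{eq:lower-bound} rather than the standard $\epsilon\sqrt{T}$ scaling of empirical-risk lower bounds.
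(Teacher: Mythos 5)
Your two-stage architecture---prove a span-restricted lower bound, then lift it to general algorithms via a random orthogonal embedding in the style of Woodworth--Srebro/Carmon et al.---is exactly the route the paper takes, and your identification of the conceptual novelty (the generalizing direction is visible in the very first gradient, so the argument cannot be a standard coordinate-unveiling one) is also correct. But the quantitative mechanism you sketch does not match the paper's, and I do not think it would close.

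You attribute the hardness to ``spurious sample-indicator directions whose coefficients in $\nabla F_S$ are of order $1/\sqrt{n}$ due to sampling fluctuations'' and to the $1/n$ averaging limiting how fast the span grows. Neither is what the construction does. The span is controlled by a Nemirovski max term $r(w)=\max\{0,\max_{i\in[d+1]}\{w(i)+\sigma_i\}\}$, whose subgradient at any span-restricted iterate is a single basis vector $e_i$ with $i$ drawn from the set $\cI$ of coordinates that are zero in \emph{every} sampled $\alpha$ (a $2^{-n}$-probability event per coordinate, not a $1/\sqrt{n}$ fluctuation). Consequently each gradient has the rigid form $\gamma_1\bar v + \epsilon e_{d+2} + e_i$: the generalizing direction $e_{d+2}$ is never delivered in isolation, only in a fixed linear combination with one fresh spurious coordinate $e_i$. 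After $T$ queries the iterate is a linear combination $\sum_{i\in\cIt{T}}\beta_i(\gamma_1\bar v+\epsilon e_{d+2}+e_i)$ with at most $T$ nonzero $\beta_i$. To push $w\cdot e_{d+2}$ toward $-1$ one needs $\epsilon\sum\beta_i\approx -1$, but the population term $\E[g_\gamma]\ge\tfrac12\sqrt{\sum h_\gamma^2(w(i))}$ then charges $\tfrac12\sqrt{\sum\beta_i^2}$, and Cauchy--Schwarz over the $T$ active coordinates gives $|\sum\beta_i|\le\sqrt{T}\sqrt{\sum\beta_i^2}$. The $\epsilon^2\sqrt{T}$ in the bound is precisely this $\ell_1$-versus-$\ell_2$ tension over a $T$-dimensional span, not a concentration/anti-concentration statement about gradient noise. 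Your proposal as written has no device forcing the oracle to tie $e_{d+2}$ to a fresh coordinate at every step, which is the load-bearing idea; without it the argument that a span-restricted $w_T$ must incur $\Omega(\min\{1-c\epsilon^2\sqrt T,0\})$ population loss does not go through.

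Two smaller notes. The final de-randomization is not Yao's principle: the sample $S$ stays random in the statement, and one simply exchanges the order of expectation over $(U,S,\xi)$ to extract a single good $U$. And in the embedding step the paper must first \emph{robustify} the construction (tilting $r$ by small offsets $\sigma_i$ and relaxing the hinge in $g_\gamma$) so that queries merely \emph{near} the relevant subspace return the same oracle answer as their projection; your sketch states the conclusion of the embedding lemma but omits that the base construction has to be modified to make it true.
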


An immediate consequence of \cref{thm:main} is that in order to obtain less than $\epsilon$ true risk we need at least $T=\Omega\brk{1/\epsilon^4}$ iterations. 

For simplicity, we state and prove the lower bound in \cref{thm:main} for the class of first-order full-batch algorithms defined above. However, our constructions readily generalize to \emph{local full-batch oracles} that provide a complete description of $F_S$ in an arbitrarily small neighborhood of the query point~\cite{nemirovski1983problem,guzman2015lower}. Such oracles subsume second-order oracles, and consequently our generalization lower bounds hold also for second-order full-batch algorithms. 

\subsection{Discussion}
\cref{thm:main} suggests that full-batch first-order algorithms are inferior to other types of first-order algorithms that operate with access to individual examples, such as SGD. Importantly, this separation is achieved not in terms of the \emph{optimization} performance but in terms of the \emph{generalization} performance. In light of this result, we next  discuss and revisit the role of the optimization algorithm in the context of SCO. In particular, we wish to discuss the implications to what are perhaps the two most prominent full-batch optimization methods, GD and regularzied-GD, and in turn compare them.

\paragraph{Gradient descent.}

Perhaps the simplest example of a full-batch method is (projected) GD: GD is an iterative algorithm that at each iteration performs an update step \[w_t=\Pi_{\W}[w_{t-1}-\eta \nabla F_S(w_t)],\]
where $\W$ is a convex set on which we project the iterated step. The output of GD is normally taken to be $w_S=\frac{1}{T} \sum w_t$ (or a randomly chosen $w_t$). Notice, that each step requires one call to a full batch oracle, and a single projection operation.
The convergence analysis of GD to the optimal solution of the \emph{empirical risk} has been widely studied. Specifically, if $n$ is the sample-size, it is known that with $\eta=O(1/\sqrt{n})$ and $T=O(n)$, GD converges to a minimizer of $F_S$ that is $O(1/\sqrt{n})$-sub optimal.
For the \emph{exact} variant of GD depicted above, the generalization performance was analyzed in the work of \citet{amir21gd} that showed that with $T=O(n)$ steps, GD will suffer $\Omega(1/\sqrt[4]{n})$ generalization error. \Cref{thm:main} extends the above result to any variant of GD (dynamic learning-rate, noisy GD, normalized GD, etc.).

\paragraph{Regularized gradient descent.}

We would also like to discuss the implication of \cref{thm:main} with respect to \emph{regularized} variants of GD that operate on the regularized empirical risk
\[ \hat F(w)=\lambda r(w) + F_S(w).\]
The main motivation of introducing the regularization term $r$ is to avoid overfitting, and a popular choice  for $r$ is the Euclidean norm $r(w)=\|w\|_2^2$. This choice leads to the following update rule for GD:
\[ w_{t+1} = \Pi_\cW\left[(1-\eta_t)\cdot (2\lambda w_{t}) - \eta_t \nabla F_S(w_t)\right],\]
Again, this update can be implemented using a single first-order full-batch oracle call that computes the quantity $\nabla F_S(w_t)$. More generally, for any  data-independent $r$, GD on $\hat{F}$ is a full-batch algorithm\footnote{Note that we are not concerned with the computational cost of computing $\grad r(w_t)$ since it does not factor into oracle complexity.}.
When $r$ is the Euclidean norm, the minimizer of $\hat F$ is known to enjoy (with choice $\lambda = O(1/\sqrt{n})$), an optimal generalization error of $O(1/\sqrt{n})$ \cite{bousquet2002stability, shalev2009stochastic}. This demonstrates the power of regularization and how it can provably induce generalization.
Nevertheless, \cref{thm:main} still applies to any optimization method over $\hat{F}$. Since optimization of $\hat{F}$ (the regularized empirical risk) to $O(1/\sqrt{n})$-precision can be done via a full-batch method, and with less than $O(n)$ calls, we observe that there are methods that minimize the regularized-empirical risk but, due to \cref{thm:main} do not reach the optimal generalization error.

\paragraph{The role of regularization.}
Finally, in light of \cref{thm:main} let us compare the different variants of GD and regularized GD that \emph{do} generalize well, in order to sharpen our understanding of the role of regularization in generalization. 
The conclusion of \cref{thm:main} is that any full-batch method that generalizes well performs at least $O(n^2)$ steps. 
For regularized GD, with $\ell_2$ regularization, $O(n^2)$ are indeed sufficient. In particular, with $O(n^2)$ iterations we can find a solution that has $O(1/n)$ empirical error. Any such solution would enjoy a generalization error of $O(1/\sqrt{n})$ \cite{shalev2009stochastic}.
For GD, \citet{bassily2020stability} showed that $O(n^2)$ iterations would also suffice to achieve $O(1/\sqrt{n})$ error. This is achieved by tuning the learning rate to $\eta=O(1/n^{3/2})$. Notice that this improvement does not require any type of added regularization. 

To summarize, both GD and regularized GD with optimal parameters require $\Theta(n^2)$ iterations to attain the optimal $O(1/\sqrt{n})$ generalization error. Overall then, explicitly adding regularization is not necessary nor does it improve the convergence rate. One might be tempted to believe that tuning the learning rate in GD induces \emph{implicitly} some sort of regularization. For example, one might imagine that GD can be biased towards minimal norm solution, which might explain redundancy of regularizing by this norm. However, this turns out also to be false: \citet{dauber2020can} showed how GD (with any reasonable choice of learning rate) can diverge from the minimal norm solution. In fact, for any regularization term $r$, one can find examples where GD does not converge to the regularized solution. Thus, even though GD and regularized-GD are comparable algorithms in terms of generalization and oracle complexity, they are distinct in terms of the solutions they select.


\section{Technical Overview}\label{sec:overview}

In this section we give an overview of our construction and approach towards proving \cref{thm:main}.
For the sake of exposition, we will describe here a slightly simpler construction which proves the main result only for algorithms that remain in the span of the gradients. In more detail, let us examine the family of iterative algorithms of the form
\begin{align} \label{eq:algo_form_simple}
    w_t 
    \in
    \spn\{\nabla F_S(w_0),\nabla F_S(w_1),\ldots, \nabla F_S(w_{t-1})\}\cap \W,
\end{align}
where $\W$ is the unit ball and $\nabla F_S(w_t)$ is full-batch oracle response to query $w_t$ as defined in \eqref{eq:full-batch-grad} above. Well-studied algorithms such as GD and GD with standard $\ell_2$ norm regularization fall into this category of algorithms. 

To extend the lower bound to algorithms not restricted the gradient span we refine the simpler construction and apply well-established techniques of random embedding in high-dimensional space. We discuss these modifications briefly in the end of this section and provide the full details in \cref{sec:construction,prf:main} below.

\subsection{A simpler construction}  \label{sec:construction_simple}

Let us fix $n,d \geq 1$ and parameters $z= (\alpha,\epsilon,\gamma) \in \{0,1\}^d\times \reals\times \reals^2=\Z$, such that $\alpha\in \{0,1\}^d$, $\epsilon > 0$ and $\gamma_1,\gamma_2 > 0$.
Define the hard instance $\fsimple : \R^{d+2} \times \Z \to \R$ as follows:
\begin{align}\label{eq:construction_simple}
    \fsimple(w;\brk{\alpha,\epsilon,\gamma})
    =
    g_\gamma(w;\alpha) + \gamma_1 v_\alpha \dotp  w + \epsilon w\dotp e_{d+2} + r(w)
    ,
\end{align}
where $g_{\gamma} ,v_{\alpha}$ and $r$ are
\begin{itemize}

\item
$
    g_{\gamma}(w;\alpha)
   \coloneqq
    \sqrt{\sum_{i\in[d]}\alpha(i)h^2_{\gamma}(w(i))}
    \quad \textrm{with}\quad
    h_{\gamma}(a)
    \coloneqq
    \begin{cases}
        0 & a \geq -\gamma_2;\\
        a+\gamma_2 & a < -\gamma_2,\\
    \end{cases}
$
    \item 
$
    r(w)
    \coloneqq
    \max\brk[c]{0,\max_{i\in [d+1]}\brk[c]{w(i)}}
    ,
$
  
\item 
   $ v_{\alpha}(i)
    \coloneqq
    \begin{cases}
        -\tfrac{1}{2n} & \textrm{if } \alpha(i) = 0;\\
        +1 & \textrm{if } \alpha(i) = 1;\\
        0 & \textrm{if } i\in\brk[c]{d+1,d+2},
    \end{cases}
$
\end{itemize}
and $e_{d+2}$ is the $(d+2)$'th standard basis vector.
The distribution we will consider is uniform over $\alpha$. That is, we draw $\alpha \in \brk[c]{0,1}^d$ uniformly at random and pick the
function~$\fsimple(w;\brk{\alpha,\epsilon,\gamma})$.

The parameters $\gamma_1$ and $\gamma_2$ of the construction should be thought of as arbitrarily small. In particular, the  term $\gamma_1 v_\alpha \dotp  w$ in \cref{eq:construction_simple} should be thought of as negligible, and the first term, $g_{\gamma}$, is roughly
\[g_{\gamma}(w;\alpha)\approx \sqrt{\sum_{i\in d} \alpha(i)(\max\{-w(i),0\})^2}.\]
Another useful property of the construction is the population risk $F(w)=\E_{z\sim D}\fsimple(w;z)$ 
is minimized at $w^\star\approx -e_{d+2}$, with expected loss $F(w^\star)\approx -\epsilon$. However, as we will see, the choice of the perturbation vector $v_{\alpha}$ and the term $r(w)$ hinder the learner from observing this coordinate and;  the first $\Omega(\epsilon^{-4}$ queries are constrained to a linear subspace where all the points have a high generalization error due to the expectation of the first term $g_\gamma$.

\subsection{Analysis} 

We next state the main lemmas we use, with proofs deferred to~\cref{sec:simple_proofs}. Given a sample $S$, let us denote $\vbar= \frac{1}{n}\sum_{\alpha\in S} v_\alpha$, and 
\[\spnone\brk[c]{u_1,u_2,\ldots}\coloneqq \spn\brk[c]{u_1,u_2,\ldots}\cap \W.\] 
Additionally, given a fixed sample we write 
\[\cI=\{i: \alpha(i)=0~\forall \alpha\in S\}\cup\{d+1\}\]
for the set of coordinates $i\in [d]$ such that $\alpha(i)=0$ for every $\alpha$ in the sample $S$, plus the coordinate $d+1$.
\begin{lemma}\label{lem:span}
Let $\gamma_1\leq\frac{1}{2T}$, $\gamma_2= \frac{2\gamma_1}{\epsilon}$, and suppose that the sample $S$ satisfies $|\cI|> T$. Then there exists a first-order full-batch oracle such that for any algorithm that adheres to 
\begin{align}\label{eq:adhering}
    w_t 
    \in
    \spnone\brk[c]1{\nabla F_S(w_{0}),\nabla F_S(w_{1}),\ldots,\nabla F_S(w_{t-1})}
    ,
\end{align}
with respect to $f(w;(\alpha,\epsilon,\gamma))$ defined in \cref{eq:construction_simple}, we have
\begin{align*}
    w_t
    \in
    \spnone_{i\in \cIt{t}}\brk[c]2{\gamma_1 \vbar +\epsilon e_{d+2} + e_{i}}
    ~~\mbox{for all}~t\in[T],
\end{align*}
where $\cIt{t}$ is the set of the $t+1$ largest coordinates in $\cI$.
\end{lemma}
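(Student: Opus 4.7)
My plan is to prove the containment by strong induction on $t$, simultaneously constructing the adversarial oracle to guarantee the stronger statement $\nabla F_S(w_s)\in\spn\{u_1,\ldots,u_{s+2}\}$ at every step $s\geq 0$, where $u_j\coloneqq\gamma_1\vbar+\epsilon e_{d+2}+e_{i_j}$ and $i_1>i_2>\cdots$ enumerate the coordinates of $\cI$ in decreasing order. Once this is in hand, \eqref{eq:adhering} forces $w_t\in\spn\{u_1,\ldots,u_{t+1}\}\cap\W=\spnone_{i\in\cIt{t}}\{u_i\}$. In the inductive step, the hypothesis gives the unique decomposition $w_t=\sum_{j\leq t+1}c_j u_j$, and reading off coordinate $d+2$ yields the key quantitative bound $|\sum_j c_j|=|w_t(d+2)|/\epsilon\leq 1/\epsilon$ from $\|w_t\|\leq 1$.

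Decomposing $\nabla F_S(w_t)=\tfrac{1}{n}\sum_{\alpha\in S}\nabla g_\gamma(w_t;\alpha)+\gamma_1\vbar+\epsilon e_{d+2}+\nabla r(w_t)$, I would first zero out the $g_\gamma$ piece: for $k\in[d]\setminus\cI$ (i.e.\ any coordinate where some $\alpha\in S$ has $\alpha(k)=1$), the decomposition collapses to $w_t(k)=\gamma_1\vbar(k)\sum_j c_j$, giving $|w_t(k)|\leq\gamma_1/\epsilon=\gamma_2/2<\gamma_2$; hence $h_\gamma(w_t(k))=0$ at every such coordinate, so $g_\gamma(w_t;\alpha)=0$ for every $\alpha\in S$ and the oracle may legitimately take $\nabla g_\gamma(w_t;\alpha)=0$. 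For the $r$ piece, I would have the oracle return $\nabla r(w)=e_{k^\star(w)}$, where $k^\star(w)$ is the \emph{largest} coordinate in $[d+1]$ attaining $\max_{k\in[d+1]} w(k)$; this rule depends only on $w$ (so is trivially a valid function of $(w,z_i)$) and is a valid element of $\partial r(w)$ whenever the max is nonnegative. The inductive step then reduces to verifying $k^\star(w_t)\in\cIt{t+1}$.

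To verify the latter, I would split on the sign of $\sum_j c_j$. If $\sum_j c_j\leq 0$, then $\vbar(k)>0$ for $k\in[d]\setminus\cI$ forces $w_t(k)\leq 0$ there, so the argmax over $[d+1]$ lies entirely in $\cI$; moreover $w_t(i_{j'})=-\tfrac{\gamma_1}{2n}\sum_j c_j$ takes the same value for every $j'\geq t+2$, so whenever the argmax contains any such $i_{j'}$ it already contains $i_{t+2}$, and the largest coordinate in the argmax is therefore either $i_{t+2}$ itself or some $i_j$ with $j\leq t+1$---in both cases $k^\star(w_t)\in\cIt{t+1}$. If $\sum_j c_j>0$, a pigeonhole argument produces $c_{j^\star}\geq\sum_j c_j/(t+1)$ with $j^\star\leq t+1$, and using $\gamma_1\leq 1/(2T)$ together with $|\vbar(k)|\leq 1$ a short calculation yields $w_t(i_{j^\star})\geq\sum_j c_j\bigl(\tfrac{1}{t+1}-\tfrac{\gamma_1}{2n}\bigr)>\gamma_1\vbar(k)\sum_j c_j\geq w_t(k)$ for every $k\notin\cI$; the argmax is then strictly inside $\{i_1,\ldots,i_{t+1}\}\subseteq\cIt{t+1}$. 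Either way $\nabla F_S(w_t)=u_{j^\star}\in\spn\{u_1,\ldots,u_{t+2}\}$, closing the induction.

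The main obstacle I anticipate is this last case analysis. Because each $\nabla f(w;z_i)$ must be a function of $(w,z_i)$ alone, the subgradient-of-$r$ rule is forced to be sample-independent, yet the target span depends heavily on $S$ through $\cI$; the blind rule ``take the largest coordinate in $\arg\max_{[d+1]} w$'' sidesteps this by only inspecting $w$, but making it always land in $\cIt{t+1}$ requires the quantitative bookkeeping above, and it is precisely here that the calibrated constants $\gamma_1\leq 1/(2T)$ and $\gamma_2=2\gamma_1/\epsilon$ do real work, by ensuring the pigeonhole lower bound dominates the $\gamma_1\vbar(k)\sum_j c_j$ contributions from coordinates outside $\cI$.
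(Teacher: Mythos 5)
Your proof is correct and follows essentially the same approach as the paper's: the same subgradient oracle for $r$ (largest index attaining the max, giving $e_{d+1}$ at the origin and $0$ elsewhere for $g_\gamma$ at nondifferentiable points), an induction on $t$, the same decomposition of $w_t$ in the $\{u_j\}$ basis, a bound on $\sum_j c_j$ via the $(d+2)$-coordinate and the unit ball constraint to kill the $g_\gamma$ term, and a case split on the sign of $\sum_j c_j$ with a max/pigeonhole argument to place $\nabla r(w_t)$ in $\cIt{t+1}$. The only (cosmetic) differences are that you bound $|w_t(k)|\le\gamma_1/\epsilon<\gamma_2$ directly rather than by contradiction as the paper's \cref{clm:gzero_simple} does, and you carry the induction hypothesis on the gradients rather than on the iterates; both are equivalent reorganizations of the same argument.
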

We next observe that in any span of the form $\{\gamma_1 \vbar +\epsilon e_{d+2} + e_{i}\}_{i\in \cIt{T}}$ such that $|\cIt{T}|\le T$, we cannot find a solution with better risk than $0$. On the other hand, note that for $\bar w = -e_{d+2}$, we have that 
\[\fsimple(\bar w;\brk{\alpha,\epsilon,\gamma})=-\epsilon.\]
In other words, our lower bound stems from the following result:
\begin{lemma}\label{lem:error}
For sufficiently small $\gamma_1\leq 2n\epsilon \gamma_2, \gamma_2\leq \epsilon/\sqrt{4T}$, and any vector $\|\bar{v}\|\le \sqrt{d}$, any output
\[ w_S\in \spnone_{i\in \cIt{T}}\brk[c]{\gamma_1\bar{v}+\epsilon e_{d+2} + e_i},\] 
satisfies
\begin{align} \label{eq:risk}
    \frac{1}{2}\sqrt{\sum_{i\in [d]}h_{\gamma}^2(w_S(i))} +\epsilon w_S(e_{d+2}) 
    \ge 
    \min\brk[c]2{1-2\epsilon^2\sqrt{T},0} - \frac{1}{2}\epsilon
    .
\end{align}
\end{lemma}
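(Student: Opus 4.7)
The plan is to expand $w_S$ in the given basis, use the unit-norm constraint to bound the coefficients, and then lower-bound the two summands of the LHS separately before finishing by case analysis on $\beta := \sum_{i\in\cIt{T}}\beta_i$.

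\textbf{Setup.} Writing $w_S = \gamma_1\beta\,\bar v + \epsilon\beta\,e_{d+2} + \sum_{i\in J}\beta_i\,e_i$ with $J=\cIt{T}$, the orthogonality of $e_{d+2}$ to $\bar v$ and to $\{e_i\}_{i\in J}\subseteq\mathbb R^{d+1}$, together with the freedom to take $\gamma_1$ much smaller than the stated upper bound $2n\epsilon\gamma_2$, reduces $\|w_S\|\le 1$ to $\|\beta\|_2^2+\epsilon^2\beta^2\le 1$ up to negligible corrections (using $\|\bar v\|\le\sqrt d$). Cauchy--Schwarz $\beta^2\le |J|\,\|\beta\|_2^2$ then yields $|\beta|\le \sqrt{|J|/(1+\epsilon^2|J|)}\le 1/\epsilon$. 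The case $\beta\ge 0$ is trivial; so assume $\beta<0$, and WLOG $\beta_i\le 0$ for all $i\in J$ (positive coordinates only raise $\beta$ without benefitting the sqrt-term).

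\textbf{Lower bound on the sqrt-term.} Shrinking $\gamma_1$ further so that $\gamma_1\beta\bar v$ is coordinate-wise negligible, only $i\in J':=J\cap[d]$ contributes to $\sum h_\gamma^2(w_S(i))$, and there $w_S(i)\approx\beta_i$. The Cauchy--Schwarz bound $\|x\|_2\ge \|x\|_1/\sqrt{|J'|}$ applied to $x_i=(|\beta_i|-\gamma_2)_+$ gives
\[
\sqrt{\sum_{i\in[d]}h_\gamma^2(w_S(i))} \;\ge\; \frac{\|\beta_{J'}\|_1}{\sqrt{|J'|}} - \sqrt{|J'|}\gamma_2.
\]
Because the $\beta_{d+1}$ coefficient contributes to $\beta$ but not to the sqrt-term, and $|\beta_{d+1}|\le\|\beta\|_2\le 1$, we have $\|\beta_{J'}\|_1\ge |\beta|-1$. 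Assuming WLOG $d+1\in J$ (so $|J'|\le T$) and using $\gamma_2\le \epsilon/\sqrt{4T}$, the error $\sqrt{|J'|}\gamma_2/2\le \epsilon/4$, whence
\[
\text{LHS} \;\ge\; \frac{(|\beta|-1)_+}{2\sqrt{|J'|}} - \epsilon^2|\beta| - \tfrac{\epsilon}{4}.
\]

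\textbf{Case analysis and main obstacle.} When $1/(2\sqrt{|J'|})\ge\epsilon^2$ (i.e.\ $|J'|\le 1/(4\epsilon^4)$), the coefficient of $|\beta|$ is nonnegative and the minimum over $|\beta|\in[0,1/\epsilon]$ is attained for $|\beta|\le 1$, giving $\text{LHS}\ge -\epsilon^2-\epsilon/4\ge -\epsilon/2$ for small $\epsilon$. Otherwise, the RHS is minimized at the norm-limited maximum $|\beta|=\sqrt{|J|/(1+\epsilon^2|J|)}$, and after the substitution $u=\epsilon^2\sqrt{|J|}\ge 1/2$, the task reduces to verifying the one-variable inequality $(1/2-u)\epsilon/\sqrt{\epsilon^2+u^2}\ge 1-2u$ for $u\ge 1/2$ — the main algebraic step. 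After multiplying both sides by $-1$ and dividing by $u-1/2>0$, this is equivalent to $\epsilon\le 2\sqrt{\epsilon^2+u^2}$, which holds since $\epsilon\le 1\le 2u\le 2\sqrt{\epsilon^2+u^2}$ for $u\ge 1/2$. The slack between $\sqrt{|J|}\le\sqrt{T+1}$ and $\sqrt T$ contributes only an additional $O(\epsilon^3)$ loss (via $\sqrt{T+1}\le\sqrt T+1/(2\sqrt T)$), which fits easily in the $\epsilon/2$ budget, giving the claimed bound $\ge 1-2\epsilon^2\sqrt T-\epsilon/2$.
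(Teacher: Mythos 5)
Your proposal is correct and follows the same overall skeleton as the paper's proof: expand $w_S$ in the spanning set, use the unit-ball constraint to bound the coefficients, lower-bound the square-root term and the $\epsilon^2\beta$ term separately, and finish with an optimization/case split. The route diverges in the choice of ``control variable.'' The paper works with $\norm{\beta}_2 = \sqrt{\sum_{i\in\cIminus_T}\beta_i^2}$, bounding the square-root term by $\norm{\beta}_2 - 2\gamma_2\sqrt T$ via the reverse triangle inequality and the linear term by $-\epsilon^2\sqrt T\,\norm{\beta}_2$ via Cauchy--Schwarz; this makes the LHS linear in $\norm{\beta}_2$ with $\norm{\beta}_2 \lesssim 1$, so the final minimization is immediate and yields $\min\{1-2\epsilon^2\sqrt T,0\}-\epsilon/2$ in one line. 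You instead work with $\abs{\beta}=\abs{\sum\beta_i}$ (essentially $\norm{\beta}_1$ after your sign reduction), bounding the square-root term from below by $\norm{\beta_{J'}}_1/\sqrt{\abs{J'}}$, and then face a constrained piecewise-linear minimization with the more awkward cap $\abs{\beta}\le M = \sqrt{\abs{J}/(1+\epsilon^2\abs{J})}$. This leads you to the one-variable inequality $\epsilon(1/2-u)/\sqrt{\epsilon^2+u^2}\ge 1-2u$, which you verify correctly --- but the extra algebra is avoidable. Two places that need tightening before this becomes a rigorous proof: (i) the ``WLOG $\beta_i\le 0$'' reduction changes the point $w_S$ (and therefore the $\gamma_1\beta\bar v$ perturbation on every coordinate), so it is cleaner to simply restrict all sums to $\cIminus_T$ without altering $w_S$, as the paper does; and (ii) ``shrinking $\gamma_1$ further'' is not quite available with only $\norm{\bar v}\le\sqrt d$ and a fixed $\gamma_1$ --- what makes the perturbation $w_S(i)-\beta_i = -\tfrac{\gamma_1}{2n}\betabar$ at most $\gamma_2$ is that $\bar v(i)=-\tfrac{1}{2n}$ on $\cI$ together with $\gamma_1\le 2n\epsilon\gamma_2$ and $\abs{\betabar}\le 1/\epsilon$; you should invoke this structure explicitly rather than appeal to $\norm{\bar v}\le\sqrt d$. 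Both are fixable without changing your argument's architecture.
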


\renewcommand{\qedsymbol}{$\blacksquare$}

\paragraph{Lower bound proof sketch for span-restricted algorithms of the form~\eqref{eq:algo_form_simple}.}
First, observe that the probability of an arbitrary index $i$ to satisfy $\alpha(i)=0$ for all $\alpha\in S$ is $(1/2)^n$. Therefore, $|\cI|-1$, the number of indexes that hold this from the possible $d$, is distributed as a binomial with $d$ experiments and success probability $p=2^{-n}$. Using elementary probability arguments one can show that for sufficiently large $d$ we have $\abs{\cI} > T$ with high probability; see \cref{cl:prob} in the appendix. This implies that the conditions of \cref{lem:span,lem:error} hold w.h.p.
To conclude, we relate the LHS of \cref{eq:risk} to the expected risk
\begin{align*}
    F(w)=\E_{\alpha \sim D}\brk[s]{\fsimple(w;\brk{\alpha,\epsilon,\gamma})}
    =
    \E_{\alpha \sim D}\brk[s]{g_\gamma(w;\alpha)} + \gamma_1\cdot \E_{\alpha \sim D}\brk[s]{v_\alpha} \dotp  w + \epsilon w\dotp e_{d+2} + r(w)
    .
\end{align*}
As $g_\gamma(w;\alpha)$ is convex w.r.t.~$\alpha$ (since $\alpha(i)=\alpha^2(i)$) we can apply Jensen's inequality with $\E_{\alpha\sim D}\brk[s]{\alpha(i)}=\frac{1}{2}$ to obtain:
\begin{align*}
    \E_{\alpha \sim D}\brk[s]{g_\gamma(w_S;\alpha)}
    \geq
    \frac{1}{2}\sqrt{\sum_{i\in [d]}h_{\gamma}^2(w_S(i))}
    .
\end{align*}
Applying the Cauchy-Schwarz inequality to the second term while also using the facts that $\norm{v_\alpha}\leq \sqrt{d}$ and that $w_S$ is in the unit ball, we get:
\begin{align*}
    \gamma_1 \ \E_{\alpha \sim D}\brk[s]{v_\alpha} \dotp  w
    \geq
    -\gamma_1 \E_{\alpha \sim D}\brk[s]{\norm{v_\alpha} \cdot  \norm{w}}
    \geq
    -\gamma_1\sqrt{d}.
\end{align*}
For sufficiently small $\gamma_1$ this term is negligible, and since $r(w)\geq 0$ we get that the expected risk is approximately the LHS term in \cref{eq:risk}. Lastly, recalling that $F(-e_{d+2})=-\epsilon$ we get that
\begin{align*}
    F(w_S)-\min_{w\in \W}F(w)
    \geq
    \frac{1}{2}\epsilon + \min\brk[c]2{1-2\epsilon^2\sqrt{T},0}~~\mbox{w.h.p.}
\end{align*}
The same lower bound (up to a constant) also hods in expectation by the the law of total expectation. Our distribution is supported on $5$-Lipschitz convex functions, so that re-parametrizing  $\frac{1}{10}\epsilon\rightarrow \epsilon$ as well as $\fsimple$ yields the claimed lower bound~\eqref{eq:lower-bound} for the case of span-restricted algorithms.
\qed

\subsection{Handling general full-batch algorithms}

The above construction establishes an $\Omega(1/\epsilon^4)$ oracle complexity lower bound on any algorithm whose iterates lie in the span of the previous gradients. While this covers a large class of algorithms, techniques like preconditioning~\cite{duchi2011adaptive}, coordinate methods~\cite{nesterov2012efficiency} and randomized smoothing~\cite{duchi2012randomized} do not satisfy this assumption. In fact, a trivial algorithm that always outputs $-e_{d+2}$ will solve the hard instance~\eqref{eq:construction_simple} in a single iteration. 

To address general algorithms, we employ a well-established technique in optimization lower bounds \cite{woodworth2016tight,carmon2019lower,diakonikolas2019lower} wherein we embed a hard instance $f(w;z)$ for span-constrained algorithms in a random high-dimensional space. More concretely, we draw a random orthogonal matrix $U\in\R^{d'\times d}$ ($U^\top U = I_{d\times d}$) and consider the $d'>d$-dimensional instance $f_U(w;z)=f(U^\top w; z)$ along with its corresponding empirical objective $F_{S,U}(w)=\frac{1}{n}\sum_{i\in[n]}f_U(w;z_i)$. Roughly speaking, we show that for a general algorithm operating with the appropriate subgradient oracle for $F_{S,U}$ the iterate $w_t$ is approximately in the span of $\brk[c]{\nabla F_{S,U}(w_0), \ldots, \nabla F_{S,U}(w_{t-1})}$ in the sense that the component of $w_t$ outside that span is nearly orthogonal to the columns of $U$. Consequently, the response of the oracle to the query $w_t$ at iteration $t$ is, with high probability, identical to the information it would return if queried with the projection of $w_t$ to the span of the previously observed gradients. This reduces, in a sense, the problem back to the span-restricted setting described above. 

For the embedding technique to work, we must robustify the hard instance construction so that small perturbations around points in the span of previous gradients do not ``leak'' additional information about the embedding $U$. To do that we make a fairly standard modification to the component $r(w)$ in~\eqref{eq:construction_simple} (known as Nemirovski's function~\cite{diakonikolas2019lower,bubeck2019complexity}), replacing it with ${\max\{0, \max_{i\in[d]} \{w(i) + i \gamma'\}, w(d+1) + \gamma''\}}$, where $\gamma',\gamma''$ are small offset coefficients that go to zero as the embedding dimension $d'$ tends to infinity. We provide the full construction and the proof of \Cref{thm:main} in \cref{sec:construction,prf:main}.

\section{The Full Construction}\label{sec:construction}
As explained above, the key difference between the simplified construction $\fsimple$ and the full construction with which we prove \cref{thm:main} is that we modify  the Nemirvoski function term $r(w)$ in order to make it robust to queries that are nearly within a certain linear subspace. In particular, we bias the different terms in the maximization defining $r(w)$ so as to control the index of the coordinate attaining the maximum. For ease of reference, we now provide a self-contained definition of our full construction with the modified Nemirovski function.

Fix $n,d \geq 1$ and parameters $z= (\alpha,\epsilon,\gamma) \in \{0,1\}^d\times \reals\times \reals^3=\Z$ are such that $\alpha\in \{0,1\}^d$, $\epsilon > 0$ and $\gamma_1,\gamma_2,\gamma_3 > 0$.
Define the hard instance $\ffull : \R^{d+2}\times \Z \to \R$ as follows:
\begin{align}\label{eq:construction}
    \ffull(w;\brk{\alpha,\epsilon,\gamma})
    =
    g_\gamma(w;\alpha) + \gamma_1 v_\alpha \dotp  w + \epsilon w\dotp e_{d+2} + r(w)
    ,
\end{align}
where $g_{\gamma} ,v_{\alpha}$ and $r$ are
\begin{itemize}

\item
$
    g_{\gamma}(w;\alpha)
   :=
    \sqrt{\sum_{i\in[d]}\alpha(i)h^2_{\gamma}(w(i))}
    \quad \textrm{with}\quad
    h_{\gamma}(a)
    :=
    \begin{cases}
        0 & a \geq -\gamma_2;\\
        a+\gamma_2 & a < -\gamma_2,\\
    \end{cases}
$
    \item 
$
    r(w)
    :=
    \max\brk[c]{0,\max_{i\in [d+1]}\brk[c]{w(i)+\sigma_i}}
    \quad \textrm{with}\quad
    \sigma_i
    :=
    \begin{cases}
        i\cdot \frac{\gamma_1\gamma_3}{4dn} & \textrm{if } i\in[d];\\
        2\gamma_3 & \textrm{if } i=d+1.
    \end{cases}
$
  
\item 
   $ v_{\alpha}(i)
    :=
    \begin{cases}
        -\tfrac{1}{2n} & \textrm{if } \alpha(i) = 0;\\
        +1 & \textrm{if } \alpha(i) = 1;\\
        0 & \textrm{if } i\in\brk[c]{d+1,d+2},
    \end{cases}
$
\end{itemize}
and $e_i$ is the $i$'th standard basis vector in $\reals^{d+2}$. We consider a distribution $D$ over $\alpha$ that is distributed uniformly over $\brk[c]{0,1}^d$; that is, we draw $\alpha \in \brk[c]{0,1}^d$ uniformly at random and pick the function~$\ffull(w;\brk{\alpha,\epsilon,\gamma})$. The rest of the parameters are set throughout the proof as follows:
\begin{equation}\label{eq:param}
    \gamma_1=\frac{\epsilon \gamma_2}{4}
    ,~~
    \gamma_2=\frac{\epsilon}{T\sqrt{d}}
    ,~~
    \gamma_3=\frac{\epsilon}{16}.
\end{equation}

With this choice of distribution $D$ as well our choice of parameters we obtain, since $\|v_{\alpha}\|\le \sqrt{d}$ and by our choice of $\gamma_1$ (as well as Jensen's inequality and $r(\cdot)\ge 0$):
\begin{equation}\label{eq:generror}F(w)=\E_{\alpha \sim D}\left[ \ffull(w;\brk{\alpha,\epsilon,\gamma})\right]
\ge \frac{1}{2}\sqrt{\sum_{i\in[d]} h^2_{\gamma}(w(i))}  +\epsilon w(d+2)-\frac{\epsilon}{4}.\end{equation} Notice that we also have that for a choice $w^\star = - e_{d+2}$, since $r(w^\star)=2\gamma_3$:
\begin{equation}\label{eq:ed2}
F(w^\star)= -\epsilon +\frac{\epsilon}{8}=-\frac{7\epsilon}{8}
\end{equation}

Our development makes frequent use of the following notation from \cref{sec:overview}:
\begin{equation*}
    \cI=\{i: \alpha(i)=0~\forall \alpha\in S\}\cup\{d+1\},~\cIt{t}=\mbox{$t$ largest elements in $\cI$},~\mbox{and}~
    \vbar= \frac{1}{n}\sum_{\alpha\in S} v_\alpha.
\end{equation*}

 We begin with the following lemma, which is a robust version of \cref{lem:span} in \cref{sec:overview}. The proof is provided in \cref{prf:spangeneral}.
 
\begin{lemma}\label{lem:spangeneral}
Suppose that $w_0=0$. Consider 
$\ffull(w;(\alpha,\epsilon,\gamma))$ with parameters as in \cref{eq:param}.
Suppose $S$ is a sample such that $|\cI|> t+1$. Assume that $w$ is such that
\[ w= w_t + q,\]
where
\begin{align}\label{eq:adhering_lem}
    w_t
    \in
    \spnone_{i\in \cIt{t}}\brk[c]2{\gamma_1 \vbar +\epsilon e_{d+2} + e_{i}},
~\textrm{and}~~
\|q\|_{\infty} \le \min\brk[c]2{\frac{\gamma_2}{3},\frac{\gamma_1\gamma_3}{16dn}}.
\end{align}
Then, 
\[\nabla F_S(w)= \gamma_1\bar{v}+\epsilon e_{d+2}+ e_i,\]
for some $i\in \cIt{t+1}$, where $\cIt{t}$ is the set of the $t+1$ largest coordinates in $\cI$.
\end{lemma}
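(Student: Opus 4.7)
Plan: The goal is to show that the oracle can return $\nabla F_S(w) = \gamma_1\vbar + \epsilon e_{d+2} + e_{i^\star}$ for some $i^\star\in\cIt{t+1}$. Expanding the gradient as $\nabla F_S(w) = \tfrac{1}{n}\sum_{\alpha\in S}\nabla g_\gamma(w;\alpha) + \gamma_1\vbar + \epsilon e_{d+2} + \nabla r(w)$ reduces the claim to two subclaims: (A) for every $\alpha\in S$ the oracle may select $\nabla g_\gamma(w;\alpha)=0$, and (B) the oracle may select $\nabla r(w)=e_{i^\star}$ with $i^\star\in\cIt{t+1}$.

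For (A), I would write $w_t = C\gamma_1\vbar + C\epsilon e_{d+2} + \sum_{i\in\cIt{t}}c_ie_i$ with $C:=\sum_i c_i$, then use $w_t(d+2)=C\epsilon$ together with $\|w_t\|\le 1$ to conclude $|C|\le 1/\epsilon$, hence $|C|\gamma_1\le \gamma_2/4$ by the parameter choice \eqref{eq:param}. Any index $j\in[d]\setminus\cI$ (those with $\alpha(j)=1$ for some $\alpha\in S$) lies outside $\cIt{t}$, so $w(j)=C\gamma_1\vbar(j)+q(j)$; with $|\vbar(j)|\le 1$ and $\|q\|_\infty\le\gamma_1\gamma_3/(16dn)\ll\gamma_2$ this gives $|w(j)|<\gamma_2$, hence $h_\gamma(w(j))=0$. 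Since $\alpha(j)=0$ for $j\in\cI$ and $\alpha\in S$, the sum inside $g_\gamma(w;\alpha)$ vanishes, so $g_\gamma(w;\alpha)=0$ and zero is a valid subgradient.

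Claim (B) is the harder half. Write $V_i(w):=w(i)+\sigma_i$ with $V_0:=0$; the structural observation is that for $j\in[d]\setminus\cIt{t}$ one has $|w(j)|\le O(\gamma_2)$, while $V_{d+1}\approx c_{d+1}+2\gamma_3$ and $V_i\approx c_i$ for $i\in\cIt{t}\cap[d]$ are ``large-scale.'' I would case-split on the sign and magnitude of $C$: if $C>0$, the only way the argmax could escape $\cIt{t+1}$ is via some $j\in[d]\setminus\cI$, which would force $c_{d+1}\lesssim -2\gamma_3$ and $c_i\lesssim \gamma_2$ for every other $i\in\cIt{t}$; summing these against $C=\sum c_i$ yields $C<-2\gamma_3+T\gamma_2$, contradicting $C>0$ once $d$ is polynomial in $T,1/\epsilon$ so that $T\gamma_2<\gamma_3$. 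If $C\le 0$ with $|C|>\gamma_3/2$, the positive term $-C\gamma_1/(2n)$ makes the ``Case B'' indices $\cI\setminus\cIt{t}$ strictly dominate the ``Case A'' indices $[d]\setminus\cI$ within $[d]\setminus\cIt{t}$ (the gap exceeds both the $\sigma$-spread and $2\|q\|_\infty$), and the strictly increasing $\sigma_i$-bias (with gaps $\gamma_1\gamma_3/(4dn)>2\|q\|_\infty$) selects the largest element of $\cI\setminus\cIt{t}$, which lies in $\cIt{t+1}$. If $|C|\le \gamma_3/2$, values on $[d]\setminus\cIt{t}$ are $O(\gamma_1\gamma_3)$, and either $V_{d+1}$ dominates (when $c_{d+1}\ge -2\gamma_3+O(\gamma_1\gamma_3)$) or else $c_{d+1}<-\gamma_3$, which together with $|C|\le\gamma_3/2$ and $C=c_{d+1}+\sum_{i\ne d+1}c_i$ forces some $c_i>\gamma_3/(2T)$ in $\cIt{t}$, whose $V_i$ dominates. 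In every case the argmax intersects $\cIt{t+1}$ and the oracle picks the corresponding $e_{i^\star}$.

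The main obstacle throughout (B) is preventing leakage of the argmax to $e_j$ with $j\in[d]\setminus\cI$: the $\sigma_i$-bias is far too weak on its own to enforce this, and the $q$-perturbation can nudge the competition in the wrong direction. The key mechanism is the algebraic identity $\sum_{i\in\cIt{t}}c_i=C$ combined with the unit-ball constraint $\|w_t\|\le 1$, which converts information about $C$ into information about individual coefficients. This is calibrated exactly by the parameter tuning $\gamma_2=\epsilon/(T\sqrt{d})$, which gives $T\gamma_2<\gamma_3$ for $d$ polynomial in $T$ and $1/\epsilon$ and thus closes every sub-case by summation.
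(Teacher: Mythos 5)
Your approach is fundamentally the same as the paper's: decompose $\nabla F_S(w)$ as $\tfrac1n\sum_\alpha\nabla g_\gamma(w;\alpha)+\gamma_1\vbar+\epsilon e_{d+2}+\nabla r(w)$, show the $g_\gamma$ terms vanish via the unit-ball constraint, and analyze $\nabla r(w)$ by case-splitting on the coefficient sum $C$ (the paper's $\betabar$). All of the key mechanisms appear: the unit-ball constraint bounding $|C|\le 1/\epsilon$, the $\gamma_3$ offset on coordinate $d+1$, the $\sigma_i$-bias that breaks ties within $\cI\setminus\cIt{t}$, and the algebraic identity $\sum_{i\in\cIt{t}}c_i=C$ that converts a bound on $C$ into bounds on individual coefficients. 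Where you differ is in how the $\nabla r$ case analysis is organized. The paper (\cref{clm:rbasis}) uses five cases, in each one explicitly exhibiting the winning index ($d+1$, the largest-coefficient index in $\cIt{t}$, or the largest element of $\cI\setminus\cIt{t}$) and verifying it dominates the spurious coordinates $[d]\setminus\cI$ by a margin of $\gamma_1\gamma_3/(4n)$. You instead use three cases and, for $C>0$, argue by contradiction: if the argmax escaped to some $j\in[d]\setminus\cI$, then comparing $V_j$ to $V_{d+1}$ and to each $V_i$ forces $c_{d+1}\lesssim -2\gamma_3$ and $c_i\lesssim\gamma_2$ for the others, whence $C=\sum c_i<-2\gamma_3+T\gamma_2<0$, a contradiction (using $T\gamma_2=\epsilon/\sqrt{d}<\gamma_3=\epsilon/16$ once $d$ is large). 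This folds the paper's sub-cases 1a/1b into one argument and is a tidy simplification; likewise, your direct derivation $|C|\le 1/\epsilon\Rightarrow |C|\gamma_1\le\gamma_2/4$ is a bit more transparent than the contradiction used in the paper's analogue of claim (A). Two points worth tightening. First, your threshold $|C|>\gamma_3/2$ in the middle case is a hair too low: the worst-case $\sigma$-spread between an index in $\cI\setminus\cIt{t}$ and one in $[d]\setminus\cI$ is $\gamma_1\gamma_3/(4n)$, which is exactly the domination margin that $-C\gamma_1/(2n)$ provides at $|C|=\gamma_3/2$, leaving no room for the $2\norm{q}_\infty$ slack; take $|C|\ge\gamma_3$ (as the paper does) and shrink the third case accordingly. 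Second, in none of your cases do you check that the maximizer's value $w(i^\star)+\sigma_{i^\star}$ is nonnegative, which is needed so the oracle returns $e_{i^\star}$ rather than $0$ (the subgradient when $r$ attains its max at the constant $0$ term); this does hold in every case — e.g.\ when $C>0$ the coordinate $i'=\argmax_{i\in\cIt{t}}c_i$ has $c_{i'}\ge C/T>\gamma_1 C/(2n)$, so $V_{i'}>0$ — but it should be said.
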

The following corollary states that
the gradient oracle's answers are resilient to small perturbation of the query (as long as they are in vicinity of the ``right" subspace): the proof is provided in 
\cref{prf:spangeneral_cor}: 
\begin{corollary}\label{cor:spangeneral}
Assume that $w$ is such that
\[ w= w_t + q,\]
where
\begin{align}\label{eq:adhering_cor}
    w_t
    \in
    \spnone_{i\in \cIt{t}}\brk[c]2{\gamma_1 \vbar +\epsilon e_{d+2} + e_{i}},
~\textrm{and}~~
\|q\|_{\infty} \le \frac{1}{4\sqrt{d}}\min\brk[c]2{\frac{\gamma_2}{3},\frac{\gamma_1\gamma_3}{16dn}}.
\end{align}
Then, 
\[\nabla F_S(w) = \nabla F_S(\Pi_{t+1}(w)),~\quad~ F_S(w)=F_S(\Pi_{t+1}(w)),\]
where $\Pi_{t}$ is a projection onto
$\spn_{i\in \cIt{t}}\{\gamma_1\bar v + \epsilon e_{d+2}+e_i\}.$
\end{corollary}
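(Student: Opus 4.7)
The plan is to reduce the corollary to two invocations of \cref{lem:spangeneral}, once for $w$ and once for $\Pi_{t+1}(w)$, and then verify that both outputs of the oracle coincide by exploiting the algebraic structure of the orthogonal residual $q^\perp := w - \Pi_{t+1}(w)$.

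First, I would decompose $\Pi_{t+1}(w) = w_t + \Pi_{t+1}(q)$. Because $w_t$ lies in $\spn_{i\in\cIt{t}}\{\gamma_1\vbar+\epsilon e_{d+2}+e_i\}\subseteq \spn_{i\in\cIt{t+1}}\{\gamma_1\vbar+\epsilon e_{d+2}+e_i\}$, linearity of projection leaves $w_t$ fixed, so $q^\perp = q - \Pi_{t+1}(q)$ is orthogonal to the projection subspace. The crude bound $\|\Pi_{t+1}(q)\|_\infty \le \|q\|_2 \le \sqrt{d+2}\,\|q\|_\infty$ combined with the tighter hypothesis $\|q\|_\infty \le \tfrac{1}{4\sqrt{d}}\min\{\gamma_2/3,\gamma_1\gamma_3/(16dn)\}$ shows that $\Pi_{t+1}(q)$ (and trivially $q$ itself) satisfies the infinity-norm requirement of \cref{lem:spangeneral}. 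Applying the lemma to $w = w_t + q$ and to $\Pi_{t+1}(w) = w_t + \Pi_{t+1}(q)$ yields $\nabla F_S(w) = \gamma_1\vbar+\epsilon e_{d+2} + e_{i_w}$ and $\nabla F_S(\Pi_{t+1}(w)) = \gamma_1\vbar + \epsilon e_{d+2}+e_{i_{\tilde w}}$ for some $i_w, i_{\tilde w} \in \cIt{t+1}$.

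The key algebraic observation is that orthogonality of $q^\perp$ to every basis vector $\gamma_1\vbar+\epsilon e_{d+2}+e_i$ with $i\in\cIt{t+1}$ yields $(\gamma_1\vbar+\epsilon e_{d+2})\cdot q^\perp + q^\perp(i) = 0$ for every such $i$. Hence there is a single constant $c$ with $q^\perp(i) = c$ for all $i\in\cIt{t+1}$ and $(\gamma_1\vbar+\epsilon e_{d+2})\cdot q^\perp = -c$. From the structure of $\nabla F_S$ produced by \cref{lem:spangeneral}, the index $i_w$ (resp.~$i_{\tilde w}$) is precisely the active coordinate of the Nemirovski-style maximization, i.e.\ the argmax of $j\mapsto w(j)+\sigma_j$ (resp.~$j\mapsto \Pi_{t+1}(w)(j)+\sigma_j$) over $j\in[d+1]$, and the lemma guarantees both argmaxes lie in $\cIt{t+1}$. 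Since $w$ and $\Pi_{t+1}(w)$ differ by the \emph{uniform} shift $c$ on $\cIt{t+1}$, the two argmaxes must coincide, so $i_w = i_{\tilde w}$, proving $\nabla F_S(w) = \nabla F_S(\Pi_{t+1}(w))$.

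Finally, for the function values, I would decompose $F_S(w) - F_S(\Pi_{t+1}(w))$ into $g_\gamma$, linear, and $r$ contributions. The $g_\gamma$ terms vanish at both points: \cref{lem:spangeneral}'s conclusion forces $w(j), \Pi_{t+1}(w)(j) \ge -\gamma_2$ for every $j\notin\cI$, so $h_\gamma$ annihilates all relevant coordinates. The linear contribution is $(\gamma_1\vbar+\epsilon e_{d+2})\cdot q^\perp = -c$, and the Nemirovski contribution is $r(w)-r(\Pi_{t+1}(w)) = (w(i^*)+\sigma_{i^*}) - (\Pi_{t+1}(w)(i^*)+\sigma_{i^*}) = q^\perp(i^*) = c$, using the common argmax $i^*=i_w=i_{\tilde w}\in \cIt{t+1}$. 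These exactly cancel, giving $F_S(w) = F_S(\Pi_{t+1}(w))$. The main obstacle is the identification $i_w = i_{\tilde w}$: a priori, the components of $q^\perp$ outside $\cIt{t+1}$ could tilt the Nemirovski argmax, but the carefully chosen orthogonality relations force $q^\perp$ to act as a scalar shift on $\cIt{t+1}$, which both preserves the argmax and triggers the exact cancellation responsible for the function value equality.
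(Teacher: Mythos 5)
Your proposal is correct and follows essentially the same route as the paper: decompose $\Pi_{t+1}(w)=w_t+\tilde q$, verify via the $\ell_2$-contractivity of projection that $\tilde q$ satisfies the perturbation bound required by \cref{lem:spangeneral}, invoke the lemma at both $w$ and $\Pi_{t+1}(w)$, and use orthogonality of $q^\perp$ to the spanning vectors to conclude that the active Nemirovski index and the resulting linear form are identical at the two points. Your framing of the orthogonality as ``$q^\perp$ is a uniform scalar shift $c$ on $\cIt{t+1}$'' is a slightly more explicit rendering of the same fact the paper exploits via the invariance of the argmax expression under adding $w^\perp$, and your explicit identification $\tilde q=\Pi_{t+1}(q)$ even tightens the constant in the paper's triangle-inequality bound.
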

\noindent

\section{Proof of \cref{thm:main}}\label{prf:main}

To prove \cref{thm:main} we embed the construction of  \cref{sec:construction} into a random, higher-dimensional  space. More formally, let $\ffull (w;z)$ be as in \cref{eq:construction}, and, for $d_2\ge d$, let $U\in \reals^{d_2\times d}$ be an orthogonal matrix, i.e., such that $U^\top U = I_{d\times d}$. We consider then the objective function over $\reals^{d_2}$:
\[ f_{U}(w;z) =  \ffull(U^\top w;\brk{\alpha,\epsilon,\gamma}).\]
Given a sample $S$ we use the notation 
\begin{equation}\label{eq:Uemploss} 
F_{S,U}(w) = \frac{1}{n} \sum_{i=1}^n f_{U}(w;z_i),
\end{equation}
for the empirical error and
\begin{equation}\label{eq:Uloss} F_{U}(w) =\E_{\alpha \sim D} [f_{U}(w;(\alpha,\epsilon,\gamma))],\end{equation}
for the expected error, where, as before, $D$ is such that the coordinates of $\alpha$ are i.i.d.\ $\mathrm{Bernoulli}(1/2)$ and the parameters $\gamma_1,\gamma_2,\gamma_3$ are fixed as in \cref{eq:param}.
We start with the following claim:
\begin{lemma}\label{cl:noisyU}
Fix a deterministic full-batch first-order algorithm, and a sample $S$ such that $|\cI|> 2T$. Let $U\in \reals^{d_2\times d}$ be a random orthogonal matrix, then for some
\[
d = O((T+1)2^n)~~\mbox{and}~~d_2 = \tilde{O}\left(\frac{d^3Tn^2}{\epsilon^6}\right),
\]
 we have that with probability at least $0.99$ (over the draw of $U$):
\begin{equation}\label{eq:Ufailed} F_U(w_S) - F_U(w^\star) \ge \min\left\{1-2\epsilon^2\sqrt{2T},0\right\}+\frac{1}{8}\epsilon,\end{equation}
where $F_U$ is as in \cref{eq:Uloss}.
\end{lemma}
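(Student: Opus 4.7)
The plan is to reduce the behavior of any general full-batch algorithm on $F_{S,U}$ to that of a span-restricted algorithm on $F_S$, by exploiting the random embedding $U$. Specifically, I would show that with high probability over $U$ the oracle's responses along the algorithm's trajectory are identical to those that arise in the span-restricted analysis of \cref{sec:overview}, so that Lemmas \ref{lem:span} and \ref{lem:error} transfer directly. The key enabler is \cref{cor:spangeneral}, which says the oracle response is unchanged by $\ell_\infty$-small perturbations of the query around any point in the relevant subspace; the plan is then to inductively maintain such smallness for the perturbation induced by $U$.

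I would set up the following inductive claim: with probability $\geq 0.99$ over $U$, there is an increasing chain $\emptyset = \mathcal{J}_0 \subsetneq \mathcal{J}_1 \subsetneq \cdots \subsetneq \mathcal{J}_T \subseteq \cI$, with $\mathcal{J}_t$ consisting of the $t$ smallest elements of $\cI$, such that for every $t \leq T$: (i) every observed gradient so far equals $U(\gamma_1 \bar v + \epsilon e_{d+2} + e_j)$ for some $j \in \mathcal{J}_t$, and (ii) $U^\top w_t = \tilde w_t + q_t$, with $\tilde w_t \in V_t := \spn_{i \in \mathcal{J}_t}\{\gamma_1 \bar v + \epsilon e_{d+2} + e_i\}$ and $\|q_t\|_\infty \leq \frac{1}{4\sqrt{d}}\min\{\gamma_2/3, \gamma_1\gamma_3/(16dn)\}$. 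The base case is immediate since $w_0 = 0$. The condition $|\cI|>2T$ is assumed in the lemma; the extra slack (compared to the $T+1$ needed in \cref{lem:span}) accommodates the union bound below.

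For the inductive step, assume the claim at iteration $t$. \cref{cor:spangeneral} then implies $\nabla F_{S,U}(w_t) = U\,\nabla F_S(\tilde w_t)$ and $F_{S,U}(w_t) = F_S(\tilde w_t)$, and \cref{lem:spangeneral} identifies this gradient as $U(\gamma_1 \bar v + \epsilon e_{d+2} + e_{j_t})$ for some $j_t \in \cI$; by an appropriate tie-breaking in the oracle we take $j_t$ to be the smallest not-yet-seen element of $\cI$, yielding $\mathcal{J}_{t+1}$ and $V_{t+1}$, and condition (i) extends. For condition (ii) at time $t+1$: by the induction the algorithm's view depends on $U$ only through $U$'s action on $V_{t+1}$ (a subspace of dimension $O(T)$), so generating $U$ lazily via Gram--Schmidt, the remaining columns $u_i$ for $i \in \cI \setminus \mathcal{J}_{t+1}$ are uniform on an orthogonal subspace of dimension $d_2 - O(T)$ and independent of $w_{t+1}$. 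Standard concentration for random orthonormal vectors against a fixed unit vector then gives $|\langle u_i, w_{t+1}\rangle| = O(\sqrt{\log(Td)/d_2})$, and a union bound over all $(t,i)$ with $t \leq T$ and $i \in \cI \setminus \mathcal{J}_{t+1}$ (plus the two fixed directions $e_{d+1}, e_{d+2}$) ensures all these inner products stay below the required tolerance simultaneously with probability $\geq 0.995$ for the stated $d_2 = \tilde O(d^3 T n^2/\epsilon^6)$. Combining this with the probability $\geq 0.995$ event $|\cI|>2T$ (which requires $d = O((T+1)2^n)$, see the probabilistic argument following \cref{lem:error}) delivers the overall $0.99$ success probability.

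Conditional on the induction succeeding, $w_S = w_T$ obeys $U^\top w_S = \tilde w_T + q_T$ with $\tilde w_T \in \spnone_{i \in \mathcal{J}_T}\{\gamma_1 \bar v + \epsilon e_{d+2} + e_i\}$, so \cref{lem:error} lower bounds $\tfrac12 \sqrt{\sum_i h_\gamma^2(\tilde w_T(i))} + \epsilon \tilde w_T(d+2)$ by $\min\{1 - 2\epsilon^2\sqrt{T},0\} - \epsilon/2$. Combining with the estimate \cref{eq:generror} applied to $F_U(w_S) = F(\tilde w_T) + O(\|q_T\|)$ (and absorbing the $O(\|q_T\|)$ into the slack coming from $\gamma_1 \bar v \cdot w$) and with the comparison $F_U(w^\star) = -7\epsilon/8$ from \cref{eq:ed2} (taking $w^\star = -u_{d+2}$) yields \cref{eq:Ufailed}, after re-indexing $\sqrt{T} \to \sqrt{2T}$ using the additional slack. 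The main technical obstacle is the concentration-plus-lazy-generation step for $U$: one must carefully formalize the coupling between the algorithm's deterministic queries and the random directions so that the residuals $q_t$ are controlled in $\ell_\infty$, not just in $\ell_2$, across all iterations and all unseen indices simultaneously; this is the standard Woodworth--Srebro / Carmon et al.\ argument but must be instantiated with the particular tolerance dictated by \cref{cor:spangeneral}, which drives the polynomial dependence of $d_2$ on $T$, $d$, $n$, and $1/\epsilon$.
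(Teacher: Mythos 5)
Your proposal is correct and essentially follows the paper's strategy: reduce the general algorithm to the span-restricted case via the perturbation-resilience of the oracle (\cref{cor:spangeneral}, \cref{lem:spangeneral}), control the out-of-span residual of $U^\top w_t$ in $\ell_\infty$ via concentration for random orthogonal matrices, take a union bound over iterations, and conclude via \cref{lem:error} and \cref{eq:generror,eq:ed2}. The paper packages the coupling/conditioning step you flag as ``the main technical obstacle'' more cleanly by introducing a chain of hybrid algorithms $A_0,\dots,A_T$ (with $A_0\equiv A$ and $A_T$ fully span-restricted) so that each per-step disagreement probability can be bounded independently by a self-contained concentration lemma (\cref{cl:projection}); your direct induction is the same argument, but it requires conditioning on a trajectory-dependent success event, which the hybrid chain avoids. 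Two small slips to watch: the construction biases $r(w)$ so the \emph{largest} available index of $\cI$ is revealed at each step (not the smallest), and the $\ell_\infty$ concentration should be phrased in a basis aligned with the span and its orthogonal complement inside $\reals^d$ (as in the paper's $S^\top(1-\Pi)S\,\tilde U^\top w$ change of variables), rather than speaking of ``columns $u_i$ for $i\in\cI\setminus\mathcal{J}_{t+1}$,'' since those are not the directions that govern $\|(1-\Pi)U^\top w\|_\infty$.
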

Before we proceed with the proof of \cref{cl:noisyU}, we explain how \cref{thm:main} follows. Fix a full-batch algorithm $A$ and let $D$ be a distribution as in \cref{cl:noisyU}. Let $\cX_{U,S,\xi}$ be an indicator for the event that \cref{eq:Ufailed} holds, where $U$ is the random orthogonal matrix, $S$ is the sample and $\xi$ is the random seed of the algorithm $A$, which is independent of $U$ and $S$.
Then by \cref{cl:noisyU} we have that
\begin{equation}\label{eq:noisyU} \E_{S,\xi}\brk[s]1{\E_{U}[\cX_{U,S,\xi} ]~\mid~ \abs{\cI} > 2T}>0.99.\end{equation}
The next claim follows a standard concentration inequality and shows that the event $\abs{\cI}>2T$ is indeed probable; the proof is provided in \cref{prf:prob}.
\begin{claim}\label{cl:prob}
Suppose $d\geq \max\brk[c]{16,4T} 2^n$. Then with probability at least $3/4$, it holds that $\abs{\cI}>2T$.
\end{claim}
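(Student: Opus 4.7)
The plan is a routine second-moment (Chebyshev) calculation applied to the number of all-zero coordinates in the sample. First I would recast the event combinatorially: for each $i \in [d]$, let $X_i = \mathbf{1}[\alpha(i)=0 \text{ for all } \alpha \in S]$, so that $|\cI| = 1 + \sum_{i=1}^d X_i$. Since $D$ is the uniform distribution on $\{0,1\}^d$, the coordinates of each $\alpha$ are independent $\mathrm{Bernoulli}(1/2)$, hence across an i.i.d.\ sample of size $n$ the $X_i$ are mutually \emph{independent} (they depend on disjoint coordinates across the $\alpha$'s) and identically distributed $\mathrm{Bernoulli}(p)$ with $p = 2^{-n}$.

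Next I would compute the mean and variance of $X \coloneqq \sum_{i=1}^d X_i$: $\E[X] = d\,2^{-n}$ and $\mathrm{Var}(X) = dp(1-p) \leq \E[X]$. Under the hypothesis $d \geq \max\{16, 4T\}\,2^n$, this gives $\E[X] \geq \max\{16, 4T\}$. Applying Chebyshev's inequality yields
\begin{align*}
    \Pr\brk[s]3{X \leq \tfrac{1}{2}\E[X]}
    \leq
    \Pr\brk[s]3{|X - \E[X]| \geq \tfrac{1}{2}\E[X]}
    \leq
    \frac{4\,\mathrm{Var}(X)}{\E[X]^2}
    \leq
    \frac{4}{\E[X]}
    \leq
    \frac{1}{4},
\end{align*}
using $\E[X] \geq 16$ in the last step.

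Finally I would combine these with $\E[X]/2 \geq 2T$ (which follows from $\E[X] \geq 4T$) to conclude that with probability at least $3/4$ we have $X \geq 2T$, and therefore $|\cI| = 1 + X > 2T$, as required. There is no real obstacle here; the only subtle point worth spelling out is the independence of the $X_i$'s (which relies on the product structure of the uniform distribution over $\{0,1\}^d$), and the careful bookkeeping that both branches of the $\max$ in $d \geq \max\{16, 4T\}2^n$ are used — one to make the Chebyshev tail small, the other to push $\E[X]/2$ past the threshold $2T$.
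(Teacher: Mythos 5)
Your proposal is correct and matches the paper's argument: both identify the count of all-zero coordinates as a binomial with mean $\mu = d2^{-n}$ and variance at most $\mu$, then apply Chebyshev with the two branches of the hypothesis on $d$ serving exactly the two roles you describe (one bounding the tail probability by $1/4$, the other pushing $\mu/2$ past $2T$). The only cosmetic difference is that the paper applies Chebyshev in the form $\Pr(|\cI| \le \mu - 2\sigma) \le 1/4$ rather than your $4\,\mathrm{Var}(X)/\E[X]^2$ bound, but the calculation is the same.
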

From \cref{cl:prob,eq:noisyU}, we can conclude that:

\[ \E_{S,\xi}\E_{U}[\cX_{U,S,\xi}]> 2/3.\]
By changing order of expectation we conclude that there exists a matrix $U$ such that w.p.\ at least $2/3$ (over the sample $S$ as well as the random bits of the algorithm $A$) the lower bound~\eqref{eq:Ufailed} holds. \Cref{thm:main} now follows from
\begin{align*}
    \E\brk[s]{F_U(w_S)}-F_U(w^\star)
    \geq 
    \E \brk[s]2{\cX_{U,S,\xi}\left( \min\left\{1-2\epsilon^2\sqrt{2T},0\right\}+\frac{1}{8}\epsilon\right)  }
    \ge
    \frac{2}{3}\min\left\{1-2\epsilon^2\sqrt{2T},0\right\}+\frac{1}{12}\epsilon.
\end{align*}
We are left with proving \cref{cl:prob,cl:noisyU}, which we do in \cref{prf:prob,prf:noisyU}.

\subsection{Proof of \cref{cl:noisyU}}\label{prf:noisyU}
We start by defining inductively a chain of algorithms $A_0, \ldots A_T$ act as intermediaries between $A$ and a full-batch first order oracle for $F_S$. $A_t$ should be thought of as an arbitrator between $A$ and the oracle, where at each iteration $i$, it receives a query $w^{(t)}_i$ from $A$, submits some query to the oracle,  and returns some answer $\cO_{A_t}(w^{(t)}_i)$ to $A$ (not necessarily the oracle's answer). We will build the chain $A_0, \ldots A_T$ in such a way that $A_0 \equiv A$, while $A_T$ forces queries to stay in the span of the gradients. We will then relate the error of $A_0$ to the error of $A_T$ by bounding the probabilities that they observe different information from the oracle. 

We formally define $A_t$ is as follows.
\begin{itemize}
\item For each $i\le t$, algorithm $A_t$ receives the query point $w^{(t)}_i$, then the algorithm defines $v^{(t)}_{i}=\Pi_{2i}(U^\top w^{(t)}_i)$, where $\Pi_i\in \R^{(d+2)\times (d+2)}$ is the Euclidean projection onto
\[ \spnone_{j\in \cIt{i}}\{ \gamma_1 \bar v + \epsilon e_{d+2}+e_j\}.\]
The algorithm then inputs the query $v^{(t)}_{i}$ to a full-batch first-order oracle for $F_S$, receives $(\nabla F_S(v^{(t)}_i), F_S(v^{(t)}_i))$ and provides $A$ with
\[ \cO_{A_t}(w_i^{(t)})= (U\nabla F_S(v_i^{(t)}), F_S(v_i^{(t)}))= (U\nabla F_S(\Pi_{2i}(U^\top w^{(t)}_i)), F_S(\Pi_{2i}(U^\top w^{(t)}_i))).\]

\item For $i> t$, algorithm $A_t$ behaves like a standard full-batch oracle. Namely, it receives a query $w^{(t)}_{i}$, defines $v_i^{(t)}=U^\top w_i^{(t)}$, queries the oracle with it, receives $(\nabla F_S(v_i^{(t)}), F_S(v_i^{(t)}))$ and provides $A$ with
\[ \cO_{A_t}(w_i^{(t)})= (U\nabla F_S(v_i^{(t)}), F_S(v_i^{(t)}))= (U\nabla F_S(U^\top w_i^{(t)}), F_S(U^\top w_i^{(t)})).\]
\end{itemize}

Notice that $A_0$ is the algorithm $A$ interacting with a valid full-batch first order oracle  for $F_{S,U}$ defined in \cref{eq:Uemploss}. In particular, at each iteration $A_0$ provides, as required from a full-batch first-order oracle:
\begin{align*}
U\nabla F_S(U^\top w_{i}^{(0)})&=U\cdot \frac{1}{n}\sum_{z\in S}\nabla \ffull(U^\top w^{(0)}_{i};z)=
\frac{1}{n}\sum_{z\in S} U\nabla \ffull(U^\top w^{(0)}_{i};z)\\& =
\frac{1}{n}\sum_{z\in S} \nabla f_{U}(w^{(0)}_i;z)=\grad F_{S,U}(w_i^{(0)}).
\end{align*}

At the other extreme the algorithm $A_T$ is an algorithm that only queries points in 
\[ \spnone_{i\in \cIt{2T}}\{ \gamma_1\bar{v} +\epsilon e_{d+2} + e_i\}.\]
We obtain then, by \cref{lem:error} as well as \cref{eq:generror,eq:ed2} that for every $v_{i}^{(T)}$:
\begin{align*} F_{U}(Uv_{i}^{(T)})&=
F(v_{i}^{(T)}) \\
&\ge
\frac{1}{2}\sqrt{\sum_{j\in [d]}h_\gamma^2(v_{i}^{(T)}(j))} + \epsilon v_{i}^{(T)}\dotp e_{d+2}-\frac{\epsilon}{4} \tag{\cref{eq:generror}}\\
&\ge
\min\left\{1-2\epsilon^2\sqrt{2T},0\right\}-\frac{3}{4}\epsilon \tag{\cref{lem:error}}\\
&\ge F_U(-Ue_{d+2})+
\min\left\{1-2\epsilon^2\sqrt{2T},0\right\}+\frac{1}{8}\epsilon \tag{\cref{eq:ed2}}
.
\end{align*}

Denote by $P_t$ the probability that algorithm $A_t$ outputs a sequence $v_i^{(t)}$ such that for some $i$:
\begin{equation}
\label{eq:success}
 F_{U}(Uv_{i}^{(t)})- F(-Ue_{d+2})\ge
\min\left\{1-2\epsilon^2\sqrt{2T},0\right\}+\frac{1}{8}\epsilon.
\end{equation}
In particular, we have argued so far that $P_t=0$. 

Next, for two vectors $v,v'$, let us write $v \equiv v'$ if \[\nabla F_{S}(v)= \nabla F_{S}( v'), ~\textrm{and}~ F_{S}(v)= F_{S}(v').\]
Now, suppose we run $A_t, A_{t-1}$ and we observe at step $t$ a vector $w^{(t-1)}_t$ such that $v^{(t-1)}_t \equiv v^{(t)}_t$.  Notice that in that case the output of $A_t$ and the output of $A_{t-1}$ is completely identical. Indeed, up to step $t$ the two algorithms are identical and at step $t$ they provide the same response to $A$ (and after that they again behave identically). Thus,
\[ P_T \ge D_{t-1} - \Pr(v^{(t-1)}_t \not\equiv v^{(t)}_t).\]
Rearranging terms and iteratively applying the formula above we obtain:
\begin{align*} 
P_0 &\le
P_T + \sum_{t=1}^T \Pr(v_t^{(t-1)} \not\equiv v^{(t)}_t)\\
&\le
\sum_{t=1}^T
\Pr(U^\top w^{(t)}_t\not\equiv \Pi_{2t} U^\top w^{(t)}_t)\\
&\le 
\sum_{t=1}^T
\Pr\left(\|(1-\Pi_{2t-1})U^\top w^{(t)}_t)\|_{\infty} \ge \frac{\epsilon^3}{2^{12}d^2Tn}\right) \tag{\cref{cor:spangeneral}~\&~\cref{eq:param}}
.
\end{align*}

The result now follows from the next lemma, whose proof we deter to \cref{prf:projection}.

\begin{lemma}\label{cl:projection}
Let $\cS_k$ be a fixed $k$-dimensional subspace in $\reals^{d_1}$. Let $U\in \reals^{d_2\times d_1}$, $d_2\ge d_1$ be a random orthogonal matrix.  Let $\Pi\in\R^{d_1 \times d_1}$ be the orthogonal projection on $\cS_k$. Let $w$ be a random unit vector that is deterministic conditional on $U\Pi$. Then
\[\Pr(\|(1-\Pi)U^\top w\|_{\infty}> c) \le 2d_2e^{-\frac{d_1c^2}{2}\cdot (d_2-k+1)}.\]
\end{lemma}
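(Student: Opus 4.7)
The plan is to reduce, via rotational invariance of the Haar measure and sequential conditioning, to a routine spherical concentration inequality. First, by right-invariance of the Haar distribution on the Stiefel manifold $\{U\in\R^{d_2\times d_1}:U^\top U=I_{d_1}\}$ under multiplication by any element of the orthogonal group $O(d_1)$, I would assume without loss of generality that $\cS_k=\spn\{e_1,\ldots,e_k\}\subset\R^{d_1}$, so $\Pi$ becomes the projection onto the first $k$ coordinates. Writing $u_j\in\R^{d_2}$ for the $j$-th column of $U$, this unfolds the object of interest as
\[(I-\Pi)U^\top w \;=\; \sum_{j=k+1}^{d_1}\langle u_j,w\rangle\,e_j, \quad\text{so}\quad \|(I-\Pi)U^\top w\|_\infty \;=\; \max_{k<j\le d_1}|\langle u_j,w\rangle|.\]

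Second, I would condition on $U\Pi$, which by hypothesis fixes $w$ and, in the chosen basis, fixes $u_1,\ldots,u_k$. A standard decomposition of Haar measure on the Stiefel manifold shows that, conditional on this data, the remaining columns $u_{k+1},\ldots,u_{d_1}$ are Haar-distributed on the Stiefel manifold of orthonormal $(d_1-k)$-frames in the $(d_2-k)$-dimensional subspace $\cH := \spn\{u_1,\ldots,u_k\}^\perp \subset \R^{d_2}$. Sampling them sequentially, for each $j\in\{k+1,\ldots,d_1\}$ and conditional on $u_1,\ldots,u_{j-1}$, the column $u_j$ is uniform on the unit sphere inside a subspace of dimension $m_j := d_2-j+1 \ge d_2-d_1+1$. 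Letting $w_j$ denote the orthogonal projection of $w$ onto this subspace, we have $\langle u_j,w\rangle = \langle u_j,w_j\rangle$ with $\|w_j\|\le \|w\|=1$.

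Third, I would invoke the standard sub-Gaussian tail bound for a linear functional against a uniform random unit vector on $S^{m-1}$ (a consequence of Levy's isoperimetric inequality): for any deterministic $v$ with $\|v\|\le 1$, $\Pr(|\langle u,v\rangle|>c)\le 2\,e^{-mc^2/2}$. Applying this conditionally at each step and union-bounding over the at most $d_1-k$ relevant coordinates $j>k$ yields
\[\Pr\!\left(\max_{j>k}|\langle u_j,w\rangle|>c\right) \;\le\; \sum_{j=k+1}^{d_1} 2\,e^{-m_j c^2/2} \;\le\; 2(d_1-k)\,e^{-(d_2-d_1+1)c^2/2},\]
which is the desired tail bound, up to matching the precise prefactor and exponent appearing in the statement of the lemma.

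The main obstacle will be the Haar-measure bookkeeping in the second step --- formally verifying that, conditional on $U\Pi$, the remaining $d_1-k$ columns of $U$ are uniformly distributed on the Stiefel manifold of orthonormal frames in $\cH$, equivalently that they can be realized by the sequential sampling construction above. This is a standard but slightly subtle invariance property of Haar measure; once it is in place, the spherical concentration step and the union bound are essentially routine.
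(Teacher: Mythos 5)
Your plan has a genuine gap in the very first step, and the rest of the argument inherits it. You appeal to right-invariance of Haar measure to assume WLOG that $\cS_k=\spn\{e_1,\ldots,e_k\}$, but this reduction is not available here because the quantity being bounded is an $\ell_\infty$ norm, which is \emph{not} invariant under rotation of the coordinate system in $\R^{d_1}$. Concretely, if $\Pi = R\Pi_0 R^\top$ with $\Pi_0$ axis-aligned, then $(I-\Pi)U^\top w = R(I-\Pi_0)(UR)^\top w$; even after substituting $\widetilde U = UR$ (which is indeed equidistributed with $U$), the residual factor $R$ on the left does \emph{not} disappear, and $\norm{R\,x}_\infty \ne \norm{x}_\infty$ for a generic orthogonal $R$. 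Thus your identity $\norm{(I-\Pi)U^\top w}_\infty = \max_{j>k}\abs{\langle u_j,w\rangle}$ holds only when $\cS_k$ is literally coordinate-aligned, which it is not in the application (there $\cS_k$ is spanned by vectors of the form $\gamma_1\bar v + \epsilon e_{d+2} + e_i$). The paper handles exactly this issue by passing through the rotation-invariant $\ell_2$ norm and converting back, paying a factor $\sqrt{d_1}$: it bounds $\norm{(I-\Pi)U^\top w}_\infty \le \sqrt{d_1}\max_{j>k}\abs{\widetilde U_j^\top w}$ (where $\widetilde U = US$ realigns the basis), which then forces the threshold in the spherical concentration step to be $c/\sqrt{d_1}$ and produces the $d_1$-dependent factor in the exponent. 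Your claimed exponent $(d_2-d_1+1)c^2/2$, which is missing this $1/d_1$, is therefore too strong.

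The rest of your argument is sound but packaged slightly differently from the paper. You condition sequentially on $u_{k+1},\ldots,u_{j-1}$ and use that the ambient sphere has dimension $d_2-j+1$; since the worst $j$ is $d_1$, this yields the weaker exponent $(d_2-d_1+1)$. The paper instead conditions only on the first $k$ columns (which is exactly the information in $U\Pi$) and observes that, by exchangeability, each column $\widetilde U_j$ with $j>k$ is \emph{marginally} uniform on the sphere of dimension $d_2-k$, giving the sharper exponent $(d_2-k+1)$ in each term. Both are correct routes; the paper's is tighter. Finally, be aware that the published statement of the lemma almost certainly has a typo: the proof in the paper yields $e^{-\frac{c^2}{2d_1}(d_2-k+1)}$ rather than $e^{-\frac{d_1 c^2}{2}(d_2-k+1)}$, so when comparing your bound to the target, the relevant form is the one with $d_1$ in the \emph{denominator} of the exponent. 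Once you replace the WLOG step with the $\ell_2$/$\ell_\infty$ conversion, your argument aligns with the paper's.
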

To apply the lemma, we set:
\[ \cS_{t}=\spn_{i\in \cIt{t}}\{\gamma_1 \bar{v}+\epsilon e_{d+2}+e_i\},\]
and we want to show that $w_t^{(t)}$ is deterministic conditional on $U\Pi_{2t-1}$. To see that, note that throughout the interaction between $A$ and $A_t$, given $w^{(t)}_i$ with $i<t$, $A_t$ calculates $v_{i}^{(t)}= \Pi_{2i}U^\top w_{i}^{(t)}= \Pi_{2i}\Pi_{2t-1} U^\top w_{i}^{(t)}=\Pi_{2i}(U\Pi_{2t-1})^\top w_{i}^{(t)}$, which is determined given $U\Pi_{2t-1}$. In turn, $A_t$ returns to $A$ the vector:
\[ U\nabla F_{S}({v}^{(t)}_i).\] By \cref{lem:spangeneral}, since $v_{i}^{(t)}\in \cS_{2i}$ we have that $\nabla F_{S}({v}^{(t)}_i)\in \cS_{2i+1}\subseteq \cS_{2t-1}$, for $i<t$, hence: 
\[ U\nabla F_{S}({v}^{(t)}_i) = U\Pi_{2t-1}\nabla F_{S}({v}^{(t)}_i).\]
Which is again determined by ${v}^{(t)}_i$ and $U\Pi_{2t-1}$. We conclude that $w_t^{(t)}$ is deterministic if we condition on $U\Pi_{2t-1}$.
Overall we obtain:

\[P_0 \le 2T d_2e^{-\frac{d^3T^2n^2}{2^{25}\epsilon^6}\cdot (d_2-2T)}.\]

Thus, for some
\[d_2 \le O\left( \frac{d^3T^2n^2}{\epsilon^6} \log ({T d_2})\right) =\tilde{O}\left(\frac{d^3T^2n^2}{\epsilon^6}\right),\]
we obtain that $A=A_0$ satisfies \cref{eq:Ufailed} with probability at least $0.99$. 

\subsection{Proof of \cref{cl:prob}} \label{prf:prob}
The probability that a given index $i \in [d]$ is such that $\alpha(i) = 0$ for all $\alpha \in S$ is $2^{-n}$. Thus, the expected number of such indices is $\mu = 2^{-n}d$ and the standard deviation is $\sigma =\sqrt{2^{-n}\brk{1-2^{-n}}d} \leq \sqrt{\mu}$. By an application of Chebyshev's inequality we obtain
\begin{align*}
    \Pr\brk{\abs{\cI} \leq 2T}
    &\leq
    \Pr(\abs{\cI} \leq \tfrac12 \mu)
    \tag{$d\geq 4T\cdot 2^n$} \\
    &\leq
    \Pr(\abs{\cI} \leq \mu -2\sqrt{\mu})
    \tag{$\mu \geq 16$ for $d\geq 16 \cdot 2^n$} \\
    &\leq
    \Pr(\abs{\cI} \leq \mu -2\sigma)
    \tag{$\sigma\leq \sqrt{\mu}$} \\
    &\leq
    \tfrac14
    \tag{Chebyshev's inequality}
    .
\end{align*}

\subsection*{Acknowledgements}

This work has received support from the Israeli Science Foundation (ISF) grant no.~2549/19 and grant no.~2188/20, from the Len Blavatnik and the Blavatnik Family foundation, from the Yandex Initiative in Machine Learning, and from an unrestricted gift from Google. Any opinions, findings, and conclusions or recommendations expressed in this work are those of the author(s) and do not necessarily reflect the views of Google.

\bibliographystyle{abbrvnat}
\bibliography{refs}

\appendix
\part*{Appendix}

\section{Proofs for \cref{sec:overview}}\label{sec:simple_proofs}
\subsection{Proof of \cref{lem:span}} \label{prf:span}

To define the first-order full-batch oracle fulfilling the lemma, it suffices to define a sub-gradient oracle for the functions $r(w)=\max\brk[c]{0,\max_{i\in [d+1]}\brk[c]{w(i)}}$ and $g_\gamma$ (all the other components of the construction are differentiable). To that end, when $\max_{i\in [d+1]}\brk[c]{w(i)}\ge 0$ we let $\grad r(w) = e_{i_w}$ where $i_w$ is the largest index for which $r(w) = w(i_w)$; when $\max_{i\in [d+1]}\brk[c]{w(i)}< 0$ the function is differntiable with $\grad r(w)=0$. For, $g_\gamma$, we simply set $\grad g_\gamma(w,\alpha)=0$ for any $w$ where $g_\gamma(w,\alpha)$ is not differentiable. 

We prove that $w_t\in\spnone_{i\in \cIt{t}}\brk[c]2{\gamma_1 \vbar +\epsilon e_{d+2} + e_{i}}$ for all $t \le T$ by induction on $t$. The base case $t=0$ is trivial since $w_0=0$ is the only vector in the span of the empty set. Moreover, we have
\begin{align*}
    \nabla F_S(w_0)
    =
    \nabla F_S(0)
    &=
    \frac{1}{n}\sum_{\alpha\in S}\nabla g_\gamma(0;\alpha) + \gamma_1 \vbar + \epsilon e_{d+2} + \nabla r(0) \\
    &=
    \gamma_1 \vbar + \epsilon e_{d+2} + \nabla r(0)
    \tag{$\nabla g_\gamma(0;\alpha)=0$}
    \\
    &=
    \gamma_1 \vbar + \epsilon e_{d+2} + e_{d+1}.
    \tag{$\nabla r(0)=e_{d+1}$}
\end{align*}
For the induction step we assume that $w_t \in \spnone_{i\in \cIt{t}}\brk[c]2{\gamma_1 \vbar +\epsilon e_{d+2} + e_{i}}$.
Therefore $w_t$ takes the form:
\begin{align}\label{eq:wt_span_form}
    w_t
    = 
    \sum_{i\in \cIt{t}}\beta_i \brk1{\gamma_1 \vbar +\epsilon e_{d+2} + e_i}
    .
\end{align}
for some $\beta_i \in \mathbb{R}$. Observe that $\vbar(i)=0$ for $i\in\{d+1,d+2\}$, and $\vbar(i)=-\frac{1}{2n}$ for any $i\in\cI$, and also that $\vbar(i)\in \delim{(}{]}1{\frac{1}{2n},1}$ for $i\notin \{\cI \cup \{d+2\}\}$. Denote $\betabar=\sum_{i\in \cIt{t}} \beta_i$, therefore
\begin{align}\label{eq:wcases_simple}
    w_t(i)
    =
    \begin{cases}
    -\frac{\gamma_1}{2n}\betabar + \beta_i & \textrm{if } i\in \cIt{t}\setminus\{d+1\}; \\
    -\frac{\gamma_1}{2n}\betabar & \textrm{if } i\in \cI\setminus\cIt{t}; \\
    \gamma_1\vbar(i)\betabar & \textrm{if } i\notin \brk[c]{\cI\cup \{d+2\}}; \\
    \beta_i & \textrm{if } i = d+1;\\
    \epsilon\betabar & \textrm{if } i = d+2.
    \end{cases}
\end{align}
Consider then the sub-gradient at $w_t$,
\begin{align}\label{eq:f_empirical_gradient_simple}
    \nabla F_S(w_t)
    =
    \frac{1}{n}\sum_{\alpha\in S}\nabla g_\gamma(w_t;\alpha) + \gamma_1 \vbar + \epsilon e_{d+2} + \nabla r(w_t)
    .
\end{align}
We examine the first and last terms in \cref{eq:f_empirical_gradient_simple}. For the first term we present the following claim:
\begin{claim} \label{clm:gzero_simple}
Suppose $w_t$ is of the form~\eqref{eq:wt_span_form}. For sufficiently small $\gamma_1\leq \frac{\epsilon\gamma_2}{2}$ we have
\begin{align*}
    \frac{1}{n}\sum_{\alpha\in S}\nabla g_\gamma(w_t;\alpha)
    =
    0
    .
\end{align*}
\end{claim}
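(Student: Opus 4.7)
The plan is to establish the stronger statement that for \emph{every individual} $\alpha\in S$ we already have $\nabla g_\gamma(w_t;\alpha)=0$; the averaged claim follows immediately. The key mechanism is that $w_t$ is forced, by the unit-ball constraint combined with the $\epsilon$-scaling of its $(d+2)$-th coordinate, to be so close to $0$ in every ``dangerous'' coordinate that the clipping function $h_\gamma$ kills all contributions to $g_\gamma$.

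Concretely, I would start by reading the coordinates of $w_t$ off of \eqref{eq:wcases_simple}. Since $g_\gamma(w_t;\alpha)=\sqrt{\sum_{i\in[d]}\alpha(i)h_\gamma^2(w_t(i))}$, the only indices that can possibly contribute are those with $\alpha(i)=1$ for the particular $\alpha\in S$ in question — in particular, such an index must lie outside $\cI$ (recall $\cI$ is exactly the set of coordinates where $\alpha(i)=0$ for \emph{all} $\alpha\in S$, together with $d+1$). For any $i\in[d]\setminus\cI$ the applicable case in \eqref{eq:wcases_simple} is $i\notin\{\cI\cup\{d+2\}\}$, giving $w_t(i)=\gamma_1\vbar(i)\betabar$, where $\betabar:=\sum_{i\in\cIt{t}}\beta_i$.

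Next I would bound $|\betabar|$ using $w_t\in\W$. The last case of \eqref{eq:wcases_simple} gives $w_t(d+2)=\epsilon\betabar$, so $\epsilon|\betabar|\le\|w_t\|_2\le 1$, hence $|\betabar|\le 1/\epsilon$. Since the definition of $v_\alpha$ guarantees $|\vbar(i)|\le 1$ coordinatewise, we obtain
\begin{equation*}
    |w_t(i)|\;\le\;\gamma_1|\vbar(i)||\betabar|\;\le\;\frac{\gamma_1}{\epsilon}\qquad\text{for every }i\in[d]\setminus\cI.
\end{equation*}
The hypothesis $\gamma_1\le\epsilon\gamma_2/2$ then yields $w_t(i)\ge -\gamma_2/2>-\gamma_2$, so by the definition of $h_\gamma$ we get $h_\gamma(w_t(i))=0$ for every such $i$.

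Combining the two cases, for every $\alpha\in S$ and every $i\in[d]$ the summand $\alpha(i)h_\gamma^2(w_t(i))$ vanishes — either because $\alpha(i)=0$ (when $i\in\cI$) or because $h_\gamma(w_t(i))=0$ (when $i\notin\cI$). Moreover, the strict inequality $w_t(i)>-\gamma_2$ persists in an $\ell_\infty$-neighborhood of $w_t$ of radius at least $\gamma_2/2$, so $g_\gamma(\,\cdot\,;\alpha)$ is identically zero on this neighborhood; hence it is genuinely differentiable at $w_t$ with $\nabla g_\gamma(w_t;\alpha)=0$ (no subgradient convention is even needed). Averaging over $\alpha\in S$ finishes the claim. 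No step here poses a real obstacle — the only thing to check carefully is the bound $|\betabar|\le 1/\epsilon$, which is immediate from the $(d+2)$-th coordinate.
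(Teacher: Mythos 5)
Your proof is correct and rests on exactly the same key observations as the paper's: only coordinates $i\notin\cI$ can contribute to $g_\gamma(\cdot;\alpha)$ for $\alpha\in S$, the $(d+2)$-th coordinate $w_t(d+2)=\epsilon\betabar$ together with $\|w_t\|_2\le 1$ bounds $|\betabar|\le 1/\epsilon$, and then $|\vbar(i)|\le 1$ plus $\gamma_1\le\epsilon\gamma_2/2$ forces $w_t(i)>-\gamma_2$ so $h_\gamma$ vanishes. The only difference is organizational: the paper splits on the sign of $\betabar$ and handles the negative case by contradiction, whereas you bound $|\betabar|$ directly and avoid the case split — a modest streamlining of the same argument.
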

\begin{proof}
Since $\alpha(i)=0$ for $i\in \cI$ we get,
\begin{align} \label{eq:g_aux_simple}
    \frac{1}{n}\sum_{\alpha\in S}\nabla g_\gamma(w_t;\alpha)
    =
    \frac{1}{n}\sum_{\alpha\in S}\nabla \brk*{\sqrt{\sum_{i\notin\brk[c]{\cI \cup \set{d+2}}}\alpha(i)h^2_\gamma(w_t(i))}}
    .
\end{align} 
Assume that $\gamma_1\leq \frac{\epsilon\gamma_2}{2}$ implies $w_t(i)>-\gamma_2$ for $i\notin\brk[c]{\cI \cup \{d+2\}}$. Given the assumption, note that $h_\gamma(w_t(i))= 0$ for any $w_t(i)\geq -\gamma_2$, then together with \cref{eq:g_aux_simple} we conclude the claim. We proceed by proving that this assumption holds.

Using \cref{eq:wcases_simple} implies that $w_t(i)=\gamma_1\vbar(i)\betabar$ for $i\notin\brk[c]{\cI \cup \{d+2\}}$. Observe that for $\betabar\geq 0$ it follows immediately that $w_t(i)\geq 0>-\gamma_2$.
Then, for $\betabar < 0$, we assume by contradiction that $w_t(i)\leq -\gamma_2$ for some $i\notin\brk[c]{\cI \cup \{d+2\}}$. Therefore, we get that $\gamma_1\vbar(i)\betabar \leq -\gamma_2$, or alternatively that 
\begin{align}\label{eq:coeff_bound_simple}
    \betabar 
    \leq 
    -\frac{\gamma_2}{\gamma_1\vbar(i)}
    .
\end{align} 
In addition, from \cref{eq:wcases_simple} it is clear that 
\begin{align*}
    \norm{w_t}
    &\geq 
    \norm{\epsilon\betabar e_{d+2}} 
    \tag{\cref{eq:wcases_simple}}\\
    &=
    \epsilon\brk{-\betabar}
    \tag{$\betabar<0$} \\
    &\geq
    \frac{\epsilon\gamma_2}{\gamma_1\vbar(i)}
    \tag{\cref{eq:coeff_bound_simple}} \\
    &\geq
    \frac{\epsilon\gamma_2}{\gamma_1}
    \tag{$\vbar(i)\in (\frac{1}{2n},1]$ for $i\notin\brk[c]{\cI \cup \{d+2\}}$} \\
    &\geq
    2
    \tag{$\gamma_1\leq \frac{\epsilon\gamma_2}{2}$}
    .
\end{align*}
However, we have $\|w_t\|_2\le 1$ since the domain $\W$ is the unit Euclidean ball.
\end{proof}

For the last term in \cref{eq:f_empirical_gradient_simple}, we use the claim below:
\begin{claim} \label{clm:rbasis_simple}
Suppose $w_t$ is of the form~\eqref{eq:wt_span_form}. For $\abs{\cI} > T$ and $\gamma_1\leq \frac{1}{2T}$ we get that 
\[\nabla r(w_t)=e_i,\]
for some $i\in\cIt{t+1}$.
\end{claim}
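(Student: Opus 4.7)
The plan is to show that $\max_{i\in[d+1]} w_t(i)\ge 0$ and that the largest index $i^\star$ attaining this maximum lies in $\cIt{t+1}$; then the oracle's tie-breaking rule (picking the largest-index argmax whenever $r(w_t)\ge 0$) immediately gives $\nabla r(w_t)=e_{i^\star}$. Using the coordinate expressions in~\eqref{eq:wcases_simple}, I partition $[d+1]$ into $\cIt{t}$ (which always contains $d+1$), $\cI\setminus\cIt{t}$ (on which $w_t$ is constant and equal to $-\tfrac{\gamma_1}{2n}\betabar$), and $[d]\setminus\cI$ (on which $w_t(i)=\gamma_1\vbar(i)\betabar$ with $\vbar(i)\in(\tfrac{1}{2n},1]$). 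The argument then splits on the sign of $\betabar=\sum_{i\in\cIt{t}}\beta_i$.

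When $\betabar<0$, the values on $[d]\setminus\cI$ are strictly negative while the common value $-\tfrac{\gamma_1}{2n}\betabar$ on $\cI\setminus\cIt{t}$ is strictly positive, so the maximum is strictly positive and attained only at coordinates in $\cI\cap[d+1]$. Since $\cIt{t}$ consists of the $t+1$ largest elements of $\cI$, every index in $\cIt{t}$ exceeds every index in $\cI\setminus\cIt{t}$; hence whenever some coordinate of $\cIt{t}$ attains the max the largest-index argmax already lies in $\cIt{t}\subseteq\cIt{t+1}$, and otherwise it equals the greatest element of $\cI\setminus\cIt{t}$, which is precisely the new index of $\cIt{t+1}\setminus\cIt{t}$. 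The degenerate case $\betabar=0$ is immediate: $w_t$ vanishes on $[d+1]\setminus\cIt{t}$, so the largest argmax is either $d+1$ (when $w_t\equiv 0$) or a coordinate of some positive $\beta_i$, in both situations belonging to $\cIt{t}\subseteq\cIt{t+1}$.

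The main obstacle is the case $\betabar>0$, where coordinates $i\in[d]\setminus\cI$ with $\vbar(i)=1$ can reach the value $\gamma_1\betabar$. To defeat this I lower-bound the maximum on $\cIt{t}$ by its average, using $\vbar(i)=-\tfrac{1}{2n}$ for $i\in\cIt{t}\setminus\{d+1\}$ and $\vbar(d+1)=0$:
\begin{equation*}
    \max_{i\in\cIt{t}} w_t(i)
    \;\ge\;
    \frac{1}{t+1}\sum_{i\in\cIt{t}} w_t(i)
    =
    \frac{\betabar\brk1{1-\tfrac{t\gamma_1}{2n}}}{t+1},
\end{equation*}
and I verify the strict inequality $\tfrac{1-t\gamma_1/(2n)}{t+1}>\gamma_1$, i.e.\ $\gamma_1(t+1+\tfrac{t}{2n})<1$. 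The assumption $|\cI|>T$ together with the well-definedness of $\cIt{t+1}$ forces $t\le T-1$, and combined with $\gamma_1\le\tfrac{1}{2T}$ this yields $\gamma_1(t+1+\tfrac{t}{2n})\le\tfrac12+\tfrac{1}{4n}\le\tfrac34<1$. Thus $\max_{i\in\cIt{t}} w_t(i)>\gamma_1\betabar\ge\max_{i\in[d]\setminus\cI}w_t(i)$ and also dominates the negative value on $\cI\setminus\cIt{t}$, so the argmax lies strictly inside $\cIt{t}\subseteq\cIt{t+1}$. Combining the three cases, $\max_{i\in[d+1]}w_t(i)\ge 0$ always holds with $i^\star\in\cIt{t+1}$, which activates the $e_{i^\star}$ branch of the oracle's definition and completes the proof.
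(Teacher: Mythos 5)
Your proof is correct and takes essentially the same route as the paper: both establish that for every $i\notin\cI\cup\{d+2\}$ the value $w_t(i)$ is strictly dominated by $\max_{j\in\cIt{t}}w_t(j)$ via an averaging argument (you average the $w_t$-values over $\cIt{t}$, the paper averages the $\beta_j$'s and then subtracts the common $-\tfrac{\gamma_1}{2n}\betabar$ term — algebraically the same bound), and both handle $\betabar\le 0$ by observing the non-$\cI$ coordinates are non-positive while some $\cI$ coordinate is non-negative. The only cosmetic difference is your explicit treatment of the tie-breaking to the largest index, which the paper states more tersely in its opening paragraph.
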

\begin{proof}
First, note that $\grad r(w_t) = e_i$ for some $i\in \cI$ implies $i\in \cIt{t+1}$, because $w_t(i)=w_t(j)$ for every $i,j \in \cI \setminus \cIt{t}$, and therefore our choice of $r(w)$ guarantees that either $i^\star \in \cIt{t}$ or $i^\star$ is the largest element in $\cI \setminus \cIt{t}$; both cases imply $i^\star \in \cIt{t+1}$. To show that $\grad r(w_t) = e_i$ for some $i\in \cI$, we split the analysis to two cases.

\paragraph{Case 1: $\betabar \geq 0$.}
Observe that if $\beta_i=0$ for all $i\in \cIt{t}$, then $w_t=0$ which coincides with the base of the induction. Assume that not all $\beta_i=0$. Then, we next show that for sufficiently small $\gamma_1 \leq \frac{1}{2T}$ we get that
\begin{align}\label{eq:istar_simple_1} 
    i^\star:=\argmax_{i\in [d+1]}\brk[c]{w_t(i)}\in \cI.
\end{align}

To see that \cref{eq:istar_simple_1} holds, note that since $\betabar \geq 0$, then for $i^\prime=\argmax_{i\in \cIt{t}}\beta_i$ we have that $\beta_{i'}>0$. Observe that $\gamma_1\vbar(i)\betabar\leq\gamma_1\betabar$ for $i\notin \brk[c]{\cI\cup \{d+2\}}$ due to the fact that $\vbar(i)\leq 1$. Suppose $\betabar>0$, so that for any $i\notin \{\cI\cup \{d+2\}\}$, again by \cref{eq:wcases_simple}

\begin{align*}
    w_t(i)=\gamma_1 \bar v(i)\betabar \le
    \gamma_1 \betabar&=2\gamma_1\betabar -\gamma_1\betabar \\
    &< \frac{1}{T}\betabar-\frac{\gamma_1}{2n}\betabar
    \tag{$\gamma_1\leq \frac{1}{2T}$}\\
    &\le \frac{1}{|\cIt{t}|}\betabar-\frac{\gamma_1}{2n}\betabar
    \tag{$\abs{\cIt{t}}\leq t\leq T$}\\
    &=\frac{1}{|\cIt{t}|}\sum_{j\in\cIt{t}}\beta_j -\frac{\gamma_1}{2n}\betabar\\
    &\le \max_{j\in \cIt{t}}\beta_j -\frac{\gamma_1}{2n}\betabar \\
    &= \beta_{i'}-\frac{\gamma_1}{2n}\betabar \\
    &= w_t(i').
\end{align*}
And therefore $i^\star \in \cI$. Similarly, when $\betabar=0$ it follows immediately that for any $i\notin \{\cI\cup \{d+2\}\}$ we get
\begin{align*}
    w_t(i) = 0 < \beta_{i'} = w_t(i')
    ,
\end{align*}
which implies again that $i^\star\in\cI$; our choice of $\grad r(w)$ guarantees that $\grad r(w_t) = e_{i^\star}$.

\paragraph{Case 2: $\betabar < 0$.}
From \cref{eq:wcases_simple}, note that since $w_t(i)<0$ for $i\notin\brk[c]{\cI\cup \{d+2\}}$, then 
\begin{align}\label{eq:istar_simple_2} 
    i^\star:=\argmax_{i\in [d+1]}\brk[c]{w_t(i)}\in \cI,
\end{align}
as long as $\cI\setminus\cIt{t}\neq\emptyset$ (Namely, $t<\abs{\cI}$). This is justified by the assumption that $\abs{\cI}> T$. Therefore, $\nabla r(w_t)=e_i$ for some $i\in\cI$.
\end{proof}

Combining both observations in \cref{clm:gzero_simple,clm:rbasis_simple} and plugging it into \cref{eq:f_empirical_gradient_simple} we obtain,
\begin{align*}
    \nabla F_S(w_t)
    =
    \gamma_1 \vbar + \epsilon e_{d+2} + e_i
    ,
\end{align*}
for some $i\in \cIt{t+1}$. We conclude that,
\begin{align*}
    w_{t+1}
    &\in 
    \spn\brk[c]1{\nabla F_S(w_{0}),\nabla F_S(w_{1}),\ldots,\nabla F_S(w_{t})}\\
    &=
    \spn_{i\in \cIt{t+1}}\brk[c]1{\gamma_1\vbar +\epsilon e_{d+2}+e_i},
\end{align*}
as $w_t\in \spn\brk[c]1{\nabla F_S(w_{0}),\nabla F_S(w_{1}),\ldots,\nabla F_S(w_{t-1})}=\spn_{i\in \cIt{t}}\brk[c]1{\gamma_1\vbar +\epsilon e_{d+2}+e_i}$. This completes the induction.

\subsection{Proof of \cref{lem:error}} \label{prf:error}

By assumption, for some sequence $\brk[c]{\beta_i}_{i\in\cIt{t}}$ where $\beta_i\in\reals$, the algorithm output $w_S$ takes the form
\begin{align*}
    w_S
    = 
    \sum_{i\in \cIt{T}}\beta_i \brk1{\gamma_1 \vbar +\epsilon e_{d+2} + e_i}
    .
\end{align*}
Observe that the solution $w_S$ must remain in the unit ball, thus
\begin{align} \label{eq:betanorm}
    \epsilon\abs2{\sum_{i\in \cIt{T}}\beta_i}
    \leq
    \norm{w_S}_2
    \leq
    1
    .
\end{align}
Denote $\cIminus_T=\brk[c]{i\in \cIt{T}:\beta_i\leq 0}$ as the non-positive subset in $\cIt{T}$.
Recalling that $\vbar \dotp e_{d+2}=0$, we have
\begin{align} \label{eq:risk_lb}
    \frac{1}{2}\sqrt{\sum_{i\in [d]}h_{\gamma}^2(w_S(i))} +\epsilon w_S\dotp e_{d+2}
    & =
    \frac{1}{2}\sqrt{\sum_{i\in \cIminus_T}h_{\gamma}^2(w_S(i))} +\epsilon^2 \betabar \notag
    \\ & \ge 
    \frac{1}{2}\sqrt{\sum_{i\in \cIminus_T}h_{\gamma}^2(w_S(i))} +\epsilon^2 \sum_{i\in \cIminus_T}\beta_i \notag \\
    &\geq
    \frac{1}{2}\sqrt{\sum_{i\in \cIminus_T}h_{\gamma}^2(\beta_i+\gamma_2)} +\epsilon^2 \sum_{i\in \cIminus_T}\beta_i 
    .
\end{align}
Where the last inequality holds since $h^2\gamma(\cdot)$ is non-increasing, together with \cref{eq:betanorm} and the fact that $\gamma_1\leq 2n\epsilon\gamma_2$
\begin{align*}
    w_S(i)
    =
    \beta_i-\frac{\gamma_1}{2n}\sum_{j\in \cIt{T}}\beta_j
    \leq
    \beta_i+\frac{\gamma_1}{2n\epsilon}
    \leq
    \beta_i+\gamma_2
    ,
\end{align*} 
for $i\in \cIminus_T\setminus\set{d+1}$. Clearly, this also holds for $i=d+1$, as $w_S(d+1)=\beta_{d+1}\leq \beta_{d+1}+\gamma_2$. Using the reverse triangle inequality, and the observations that $\abs{\cIminus_T}\leq T$ and that $\abs{h_\gamma(a)-a}\leq \gamma_2$ for any $a\leq \gamma_2$, we get
\begin{align*}
    \sqrt{\sum_{i\in \cIminus_T}h_{\gamma}^2(\beta_i+\gamma_2)}
    \geq
    \sqrt{\sum_{i\in \cIminus_T}(\beta_i+\gamma_2)^2} - \sqrt{\sum_{i\in \cIminus_T}\brk{h_{\gamma}(\beta_i+\gamma_2) - (\beta_i+\gamma_2)}^2}
    \geq
    \sqrt{\sum_{i\in \cIminus_T}\beta^2_i}-2\gamma_2\sqrt{T}
    .
\end{align*}
Plugging this into \cref{eq:risk_lb},
\begin{align*}
    \frac{1}{2}\sqrt{\sum_{i\in [d]}h_{\gamma}^2(w_S(i))} +\epsilon w_S\dotp e_{d+2}
    &\geq
    \frac{1}{2}\sqrt{\sum_{i\in \cIminus_T}\beta^2_i} +\epsilon^2 \sum_{i\in \cIminus_T}\beta_i-\gamma_2\sqrt{T}\\
    &\geq
    \brk2{\frac{1}{2} -\epsilon^2\sqrt{T}} \sqrt{\sum_{i\in \cIminus_T}\beta^2_i}-\frac12\epsilon
    \tag{$\gamma_2\leq \epsilon/\sqrt{4T}$}
    ,
\end{align*}
where the second inequality follows from $\norm{u}_1\leq \sqrt{d}\norm{u}_2$ and $\abs{\cIminus_T}\leq T$. To conclude, note that
\begin{align*}
    \sqrt{\sum_{i\in\cIt{T}}\beta^2_i} - \frac{\gamma_1}{2n}\abs2{\sum_{i\in \cIt{T}}\beta_i}
    \leq 
    \norm{w_S}_2\leq 1
    ,
\end{align*}
and we obtain the required result for $\gamma_1\leq 2n\epsilon$.
\section{Proofs for \cref{prf:main}}
\label{sec:proofs}

\subsection{Proof of \cref{lem:spangeneral}} \label{prf:spangeneral}
The proof mirrors that of \cref{lem:span} and comprises of using induction on $t$ to show for any $t\le T$ and $w$ as in \cref{eq:adhering_lem}, we have $\grad F_S(w)=\gamma_1 \vbar + \epsilon e_{d+2}+e_i$ for $i\in\cIt{t+1}$. As in the proof of \cref{lem:span}, we choose a sub-gradient oracle for $r(w)$ such that all sub-gradients are standard basis vectors, and if multiple coordinate achieve the maximum defining $r$, the largest one is selected.

The basis of the induction is $t=0$, where we observe that for $\|q\|_{\infty}\le \gamma_2/3$ we have that $\nabla g_{\gamma}(q;\alpha)=0$. Note that $\|q\|_{\infty}\le \gamma_3/3$ for $\gamma_1<\frac{8n}{3}$, 
and therefore $\nabla r(q)=e_{d+1}$ due to the fact that $\sigma_i+\gamma_3/3 < \sigma_{d+1}-\gamma_3/3$ for any $i\in[d]$.
Hence
\begin{align*}
    \nabla F_S(w)
    &=
    \frac{1}{n}\sum_{\alpha\in S}\nabla g_\gamma(q;\alpha) + \gamma_1 \vbar + \epsilon e_{d+2} + \nabla r(q) \\
    &=
    \gamma_1 \vbar + \epsilon e_{d+2} + \nabla r(q)
    \tag{$\nabla g_\gamma(q;\alpha)=0$}
    \\
    &=
    \gamma_1 \vbar + \epsilon e_{d+2} + e_{d+1}
    \tag{$\nabla r(q)=e_{d+1}$}\\
    &=\nabla F_S(w_0)  \tag{$w_0=0$}.
\end{align*}
 For the induction step we assume that $w_t \in \spnone_{i\in \cIt{t}}\brk[c]2{\gamma_1 \vbar +\epsilon e_{d+2} + e_{i}}$.
Therefore $w_t$ takes the form
\begin{align*}
    w_t
    = 
    \sum_{i\in \cIt{t}}\beta_i \brk1{\gamma_1 \vbar +\epsilon e_{d+2} + e_i}
    .
\end{align*}
for some $\beta_i \in \mathbb{R}$. Observe that $\vbar(i)=0$ for $i\in\{d+1,d+2\}$, that $\vbar(i)=-\frac{1}{2n}$ for any $i\in\cI$, and that $\vbar(i)\in \delim{(}{]}1{\frac{1}{2n},1}$ for $i\notin \{\cI \cup \{d+2\}\}$. Denote $\betabar=\sum_{i\in \cIt{t}} \beta_i$, therefore
\begin{align}\label{eq:wcases}
    w_t(i)
    =
    \begin{cases}
    -\frac{\gamma_1}{2n}\betabar + \beta_i & \textrm{if } i\in \cIt{t}\setminus\{d+1\}; \\
    -\frac{\gamma_1}{2n}\betabar & \textrm{if } i\in \cI\setminus\cIt{t}; \\
    \gamma_1\vbar(i)\betabar & \textrm{if } i\notin \brk[c]{\cI\cup \{d+2\}}; \\
    \beta_i & \textrm{if } i = d+1;\\
    \epsilon\betabar & \textrm{if } i = d+2.
    \end{cases}
\end{align}
Consider then the gradient at $w_t$,
\begin{align}\label{eq:f_empirical_gradient}
    \nabla F_S(w_t)
    =
    \frac{1}{n}\sum_{\alpha\in S}\nabla g_\gamma(w_t;\alpha) + \gamma_1 \vbar + \epsilon e_{d+2} + \nabla r(w_t)
    .
\end{align}
We examine the first and last terms in \cref{eq:f_empirical_gradient}. For the first term we repeat and modify the analysis in \cref{clm:gzero_simple} to handle the noise vector $q$.
\begin{claim} \label{clm:gzero}
Suppose that $w=w_t+q$ where $w_t$ satisfies \cref{eq:wt_span_form} and $\norm{q}_\infty\leq\gamma_2/3$. Then, we have
\begin{align*}
    \frac{1}{n}\sum_{\alpha\in S}\nabla g_\gamma(w_t;\alpha)= \frac{1}{n}\sum_{\alpha\in S}\nabla g_\gamma(w;\alpha)
    =
    0
    .
\end{align*}
\end{claim}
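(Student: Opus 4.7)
The plan is to mirror the proof of \cref{clm:gzero_simple}, but with the extra requirement that the argument be robust to a perturbation $q$ of sup-norm at most $\gamma_2/3$. As in the simple case, it suffices to show that for every coordinate $i\in[d]$ with $\alpha(i)=1$ for some $\alpha\in S$ -- equivalently, for every $i\in[d]\setminus\cI$ -- we have both $w_t(i)>-\gamma_2$ and $w(i)>-\gamma_2$ strictly. Given such strict inequalities, $h_\gamma$ vanishes identically in an open neighborhood of $w_t(i)$ and of $w(i)$, so $g_\gamma(\,\cdot\,;\alpha)$ is identically zero in neighborhoods of both points, yielding $\nabla g_\gamma(w_t;\alpha)=\nabla g_\gamma(w;\alpha)=0$. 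Averaging over $\alpha\in S$ then gives the claim.

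The main step is to establish the strict lower bound $w_t(i)>-2\gamma_2/3$ for $i\notin \cI\cup\{d+2\}$; once this is known, the triangle inequality $w(i)\ge w_t(i)-\|q\|_\infty \ge -2\gamma_2/3 - \gamma_2/3 = -\gamma_2$ (with the slightly stronger strict version coming from the strict bound on $w_t(i)$) handles the perturbed point. I would proceed by contradiction, assuming $w_t(i)\le -2\gamma_2/3$ for some such $i$. Using the span form~\eqref{eq:wt_span_form} and \cref{eq:wcases}, $w_t(i)=\gamma_1 \vbar(i)\betabar$, and since $\vbar(i)\in(1/(2n),1]$ for $i\notin\cI\cup\{d+2\}$, this forces $\betabar<0$ and $|\betabar|\ge \frac{2\gamma_2}{3\gamma_1\vbar(i)}\ge \frac{2\gamma_2}{3\gamma_1}$.

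Plugging in the parameter choice $\gamma_1=\epsilon\gamma_2/4$ from \cref{eq:param}, this lower bound becomes $|\betabar|\ge 8/(3\epsilon)$. But the $(d+2)$-th coordinate of $w_t$ is $\epsilon\betabar$, so $\|w_t\|_2\ge \epsilon|\betabar|\ge 8/3$, contradicting $w_t\in \W$ (the unit ball). This yields $w_t(i)>-2\gamma_2/3$ strictly, and hence, combined with $\|q\|_\infty\le\gamma_2/3$, also $w(i)>-\gamma_2$. The argument $w_t(i)>-\gamma_2$ (needed for the second equality) is immediate from $w_t(i)>-2\gamma_2/3>-\gamma_2$.

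The only subtlety -- which is not really an obstacle but worth highlighting -- is that one needs the slightly stronger bound $w_t(i)>-2\gamma_2/3$ (rather than just $>-\gamma_2$ as in the simple case) in order to absorb the perturbation $q$. This is exactly what the quantitatively sharper choice $\gamma_1=\epsilon\gamma_2/4$ delivers, via the factor $8/3>1$ in the contradiction, as opposed to the factor $2>1$ used for $\gamma_1\le \epsilon\gamma_2/2$ in \cref{clm:gzero_simple}.
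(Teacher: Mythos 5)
Your proof is correct and mirrors the paper's own argument: both establish a strict lower bound $w_t(i) > -c\gamma_2$ for $i \notin \cI \cup \{d+2\}$ (you take $c=2/3$, the paper takes $c=1/2$) via the contradiction $\|w_t\|_2 > 1$ forced by the parameter choice $\gamma_1 = \epsilon\gamma_2/4$, and then absorb the perturbation $\|q\|_\infty \le \gamma_2/3$ to get $w(i) > -\gamma_2$ so that $h_\gamma$ and hence $\nabla g_\gamma$ vanish at both $w_t$ and $w$. The choice of threshold constant is immaterial; both work with the parameters of \cref{eq:param}.
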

\begin{proof}
Since $\alpha(i)=0$ for $i\in \cI$ we get,
\begin{align} \label{eq:g_aux}
    \frac{1}{n}\sum_{\alpha\in S}\nabla g_\gamma(w_t;\alpha)
    =
    \frac{1}{n}\sum_{\alpha\in S}\nabla \brk*{\sqrt{\sum_{i\notin\brk[c]{\cI \cup {d+2}}}\alpha(i)h^2_\gamma(w_t(i))}}
    .
\end{align} 
Assume that $\gamma_1\leq \frac{\epsilon\gamma_2}{2}$ implies $w_t(i)>-\gamma_2/2$ for $i\notin\brk[c]{\cI \cup \{d+2\}}$. Under this assumption we also have $w_t(i)+q(i)> -\gamma_2/2-\gamma_2/3>-\gamma_2$, and hence, since $h_\gamma(w)= 0$ for any $w\geq -\gamma_2$, then together with \cref{eq:g_aux} we conclude the claim. From here the proof continues as in \cref{clm:gzero_simple}.

\end{proof}

For the last term in \cref{eq:f_empirical_gradient}, we use the following claim, which is the noise-robust counterpart of \cref{clm:rbasis_simple}.
\begin{claim} \label{clm:rbasis}
Suppose that $w=w_t+q$ where $w_t$ satisfies \cref{eq:wt_span_form}, $\|q\|_{\infty} \le \frac{\gamma_1\gamma_3}{8n}$ and  $\abs{\cI} > t$. Then  
\[\nabla r(w)=e_i,\] 
for some $i\in\cIt{t+1}$.
\end{claim}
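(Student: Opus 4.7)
The plan is to mimic the structure of \cref{clm:rbasis_simple}, with adaptations to handle the perturbation $q$ and the newly introduced coordinate biases $\sigma_i$ in the modified Nemirovski function $r$. The target is to show that $i^\star := \argmax_{i\in [d+1]}\{w(i)+\sigma_i\}$ is well-defined, achieves a positive maximum, and lies in $\cIt{t+1}$; the sub-gradient oracle then returns $e_{i^\star}$.

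Following the simple case, I would split the argument by the sign of $\betabar := \sum_{i\in \cIt{t}} \beta_i$. In Case 1 ($\betabar \ge 0$), I would identify $i'\in\cIt{t}$ with the maximal $\beta_{i'}\ge \betabar/(T+1)$ and, as in the simple proof, show that $w_t(i')$ exceeds $w_t(j)$ for $j\notin \cI\cup\{d+2\}$ by a margin large enough to absorb the additive perturbations $|\sigma_{i'}-\sigma_j|\le 2\gamma_3$ and $|q(i')-q(j)|\le 2\|q\|_\infty$, using the parameter bounds $\gamma_1=\epsilon\gamma_2/4$, $\gamma_3=\epsilon/16$ from \cref{eq:param}. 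When $\beta_{i'}$ is too small for this direct comparison, every $\beta_i$ is tiny, so $w_t\approx 0$ and the dominant bias $\sigma_{d+1}=2\gamma_3$ forces $i^\star=d+1\in\cIt{t+1}$. In Case 2 ($\betabar<0$), $w_t(i)<0$ for $i\notin\cI\cup\{d+2\}$ while $w_t(i)=-\tfrac{\gamma_1}{2n}\betabar>0$ for $i\in\cI\setminus\cIt{t}$, so after the small perturbations the argmax still lies in $\cI$; the regime of small $|\betabar|$ is again handled by $\sigma_{d+1}$ dominating.

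The new step, absent from \cref{clm:rbasis_simple}, is narrowing $i^\star$ from $\cI$ down to $\cIt{t+1}$. All indices in $\cI\setminus\cIt{t}$ share the common $w_t$ value $-\tfrac{\gamma_1}{2n}\betabar$, so the argmax over this subset is governed entirely by $\sigma_i+q(i)$. Since $\sigma_i$ is strictly increasing on $[d]$ with adjacent gap $\gamma_1\gamma_3/(4dn)$ and $|q(i)-q(j)|\le 2\|q\|_\infty$, the ``largest-index'' rule is preserved provided $\|q\|_\infty$ is tight enough to be dominated by these $\sigma$-gaps, in which case $i^\star=\max(\cI\setminus\cIt{t})\in\cIt{t+1}$. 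Finally, positivity $r(w)>0$ (so that $\nabla r(w)$ is $e_{i^\star}$ rather than $0$) is verified separately: in every sub-case, the maximum coordinate value is bounded below by either $\sigma_{d+1}-\|q\|_\infty\ge\gamma_3$, or a suitable multiple of $|\betabar|$ or $\beta_{i'}$ that strictly exceeds zero for small enough $\gamma_1$.

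The main technical obstacle I anticipate is the localization step: adjacent $\sigma$-gaps within $[d]$ are only $\Theta(\gamma_1\gamma_3/(dn))$, so the perturbation bound must carry the extra factor of $1/d$ seen in the hypothesis of the parent lemma \cref{lem:spangeneral} for the step to go through cleanly. The rest of the argument consists of bookkeeping around the small-$\beta$ boundary cases and confirming that the $\sigma_{d+1}$ bias is always large enough to guarantee $r(w)>0$.
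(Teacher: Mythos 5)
Your high-level plan matches the paper's: establish, for the noise-free iterate $w_t$, a strict gap of size $\Theta(\gamma_1\gamma_3/n)$ between the winning coordinate in $\cI$ and every coordinate outside $\cI\cup\{d+2\}$ (the paper's displayed inequality \cref{eq:rbasis}), and then absorb both the perturbation $q$ and the coordinate biases $\sigma_i$ into that margin. You also correctly diagnose the localization subtlety: distinguishing \emph{which} element of $\cI\setminus\cIt{t}$ wins the argmax requires the noise to be dominated by the adjacent $\sigma$-gaps, which are only $\gamma_1\gamma_3/(4dn)$, so the $1/d$ factor present in the hypothesis of \cref{lem:spangeneral} is genuinely needed; the paper silently relies on that stronger bound when applying the claim. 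This is a real observation.

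However, there is a gap in your ``small $\beta$'' regimes. In Case 1 you assert that when $\beta_{i'}=\max_{i\in\cIt{t}}\beta_i$ is small, ``every $\beta_i$ is tiny, so $w_t\approx 0$.'' This is false: the $\beta_i$ are only bounded above by $\beta_{i'}$, not below, and in particular $\beta_{d+1}$ can be strongly negative (say $\beta_{d+1}\approx -1$) even while $\betabar\ge 0$ and $\max_i\beta_i$ is tiny. In that situation $w_t(d+1)+\sigma_{d+1}=\beta_{d+1}+2\gamma_3$ is very negative and $\sigma_{d+1}$ does \emph{not} force the argmax to $d+1$; the argmax cannot be rescued by the bias alone. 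The same problem recurs in your Case 2 for the regime $-\gamma_3<\betabar\le 0$. The paper closes this gap by a further dichotomy on $\beta_{d+1}$: when $\beta_{d+1}+\gamma_3\ge \gamma_1\betabar$ (resp.\ $\beta_{d+1}>-\tfrac32\gamma_3$) the $\sigma_{d+1}$ bias does dominate, and in the complementary sub-case it deduces $\sum_{i\in\cIt{t}\setminus\{d+1\}}\beta_i>\gamma_3$ (resp.\ $>\gamma_3/2$), so some $\beta_{i'}$ with $i'\ne d+1$ is at least $\gamma_3/T$, which supplies the needed margin directly. To complete your argument you would need to incorporate this additional case split on $\beta_{d+1}$; the split on $\betabar$ alone is not sufficient.
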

\begin{proof}
For the proof, we will show that there is always $i^\star \in \cI$ such that:
\begin{equation}\label{eq:rbasis} w_t(i^\star )+\sigma_{i^\star} > \max_{i\notin \{\cI\cup\{d+2\}\}}\{w_t(i) + \sigma_i\} + \frac{\gamma_1\gamma_3}{4n}.\end{equation}
Combined with the assumption $\|q\|_{\infty} \le \frac{\gamma_1\gamma_3}{8n}$, we immediately conclude that $\grad r(w_t+q) = e_{i'}$ for some $i'\in \cI$. We may further conclude that $i'\in \cIt{t+1}$, because $w_t(i)=w_t(j)$ for every $i,j \in \cI \setminus \cIt{t}$, and therefore our choice of $r(w)$ guarantees that either. We now turn to show \cref{eq:rbasis} following the lines of the proof of \cref{clm:rbasis_simple} but taking the noise vector $q$ into account. We split the analysis to five cases.

\paragraph{Case 1a: $\betabar > 0$ and $\beta_{d+1}+\gamma_3\geq \gamma_1\betabar$.}

By \cref{eq:wcases}, for any $i\notin \{\cI\cup \{d+2\}\}$ we have
\begin{align*}
    w_t(d+1) + \sigma_{d+1}
    &=
    \beta_{d+1} + 2\gamma_3\\
    &\ge
    \gamma_1\vbar\brk{i} \betabar + \gamma_3
    \tag{$\vbar(i)\leq 1$ for any $i\in [d+1]$} \\
    &>
    \gamma_1\vbar\brk{i} \betabar + \frac{\gamma_1\gamma_3}{2n}
    \tag{$\gamma_1< 2n$}\\
    &\ge
    w_t(i) + \sigma_i+\frac{\gamma_1\gamma_3}{4n}
    \tag{$\sigma_i\leq \frac{\gamma_1\gamma_3}{4n}$}
    ,
\end{align*}
so \cref{eq:rbasis} holds with $i^\star=d+1$. 

\paragraph{Case 1b: $\betabar > 0$ and $\beta_{d+1}+\gamma_3< \gamma_1\betabar$.}
For $\gamma_1\leq \frac{1}{4T}\leq 1$ we get,
\begin{align*}
    \beta_{d+1}+\gamma_3
    <
    \gamma_1\betabar
    \leq
    \beta_{d+1}+\sum_{i\in \cIt{t}\setminus\{d+1\}}\beta_i
    .
\end{align*}
In other words, 
\begin{align} \label{eq:case1}
    \gamma_3<\sum_{i\in \cIt{t}\setminus\{d+1\}}\beta_i.
\end{align}
Thus, for any, $i\notin \{\cI\cup \{d+2\}\}$:
\begin{align*}
    w_t(i)+\sigma_i+\frac{\gamma_1\gamma_3}{4n}
    &=
    \gamma_1\vbar(i)\betabar + i\cdot \frac{\gamma_1\gamma_3}{4dn}+\frac{\gamma_3\gamma_1}{4n}\\
    &\leq
    \gamma_1\betabar + \frac{\gamma_1\gamma_3}{2n}
    \tag{$\vbar(i)\leq 1$ and $i\leq d$}  \\
    &\leq
    \gamma_1\betabar + \frac{\gamma_3}{2T}\tag{$\gamma_1\leq \frac{n}{T}$}\\
    &<
    2\gamma_1\betabar + \frac{1}{2T}\sum_{j\in \cIt{t}\setminus\{d+1\}}\beta_j - \gamma_1\betabar
    \tag{\cref{eq:case1}} \\
    &<
    \frac{1}{2T}\betabar + \frac{1}{2T}\sum_{j\in \cIt{t}\setminus\{d+1\}}\beta_j - \frac{\gamma_1}{2n}\betabar
    \tag{$\gamma_1\leq \frac{1}{4T}$ and $\betabar>0$} \\
    &\leq
    \frac{1}{2}\max_{j\in \cIt{t}}\beta_j + \frac{1}{2}\max_{j\in \cIt{t}\setminus\{d+1\}}\beta_j - \frac{\gamma_1}{2n}\betabar
    \tag{$\abs{\cIt{t}}\leq T$} \\
    &\le
    \beta_{i^\prime}-\frac{\gamma_1}{2n}\betabar
    \tag{$i^\prime:=\argmax_{i\in \cIt{t}}\beta_i$} \\
    &\leq
    w_t(i^\prime) + \sigma_{i^\prime}
    \tag{$\sigma_i \geq 0$ for any $i\in [d+1]$}
\end{align*}

\paragraph{Case 2:  $\betabar\leq-\gamma_3$.}
For any $i^\prime\in\cI\setminus\cIt{t}$ (non-empty by the assumption $\abs{\cI}>t$) and for any $i\notin\brk[c]{\cI\cup \{d+2\}}$ we have
\begin{align*}
    w_t(i^\prime)+\sigma_{i^\prime}
    &=
    -\frac{\gamma_1}{2n}\betabar+i^\prime\cdot\frac{\gamma_1\gamma_3}{4dn}
    \tag{\cref{eq:wcases}} \\
    &>
    \frac{\gamma_1\gamma_3}{2n}
    \tag{$\betabar\leq-\gamma_3$} \\
    &\geq
    \gamma_1\vbar(i)\betabar+\frac{\gamma_1\gamma_3}{2n}
    \tag{$\vbar(i) > 0$ for $i\notin\brk[c]{\cI\cup \{d+2\}}$} \\
    &= 
    \gamma_1\vbar(i)\betabar+\frac{\gamma_1\gamma_3}{4n}+\frac{\gamma_1\gamma_3}{4n}\\
    &\geq
    w_t(i)+\sigma_i+\frac{\gamma_1\gamma_3}{4n}
    \tag{$\sigma_i= i\cdot\frac{\gamma_1\gamma_3}{4dn}\leq\frac{\gamma_1\gamma_3}{4n}$}
    .
\end{align*}
\paragraph{Case 3a:  $-\gamma_3<\betabar\le 0$ and $\beta_{d+1}>-\tfrac32\gamma_3$.}
For any $i\notin\brk[c]{\cI\cup \{d+2\}}$ we get,
\begin{align*}
    w_t(d+1)+\sigma_{d+1}
    &=
    \beta_{d+1}+2\gamma_3
    \tag{\cref{eq:wcases}}\\
    &>
    \frac{1}{2}\gamma_3
    \tag{$\beta_{d+1}>-\tfrac32\gamma_3$}\\
    &\geq
    \gamma_1\vbar(i)\betabar + \frac{1}{2}\gamma_3
    \tag{$\vbar(i) > 0$ for $i\notin\brk[c]{\cI\cup \{d+2\}}$} \\
    &\geq
    \gamma_1\vbar(i)\betabar + \frac{\gamma_1\gamma_3}{2n},
    \tag{$\gamma_1\leq n$} 
\end{align*}
and \cref{eq:rbasis} holds  as in the previous case.

\paragraph{Case 3b:  $-\gamma_3<\betabar\le 0$ and $\beta_{d+1}\le-\tfrac32\gamma_3$.}
Note that this case implies
\begin{align} \label{eq:case2}
    \frac{\gamma_3}{2}<\sum_{i\in \cIt{t}\setminus\{d+1\}}\beta_i.
\end{align}
Hence, there exists $i^\prime\in\cIt{t}\setminus\{d+1\}$ such that $\beta_{i^\prime}>\frac{\gamma_3}{2\brk{\abs{\cIt{t}}-1}}\geq\frac{\gamma_3}{2T}$ and we get that for any $i\notin\brk[c]{\cI\cup \{d+2\}}$,
\begin{align*}
    w_t(i^\prime)+\sigma_{i^\prime}
    &=
    \beta_{i^\prime}-\frac{\gamma_1}{2n}\betabar+i^\prime\cdot\frac{\gamma_1\gamma_3}{2dn}
    \tag{\cref{eq:wcases}}\\
    &>
    \frac{\gamma_3}{2T}
    \tag{$\beta_{i^\prime}>\frac{\gamma_3}{2T}$ and $\betabar\leq 0$}\\
    &\geq
    \gamma_1\vbar(i)\betabar + \frac{\gamma_3}{2T}
    \tag{$\vbar(i) > 0$ for $i\notin\brk[c]{\cI\cup \{d+2\}}$} \\
    &\geq
    \gamma_1\vbar(i)\betabar + \frac{\gamma_1\gamma_3}{2n},
    \tag{$\gamma_1\leq \frac{n}{T}$}
\end{align*}
and \cref{eq:rbasis} holds as in the previous two cases.
\end{proof}

Combining both observations in \cref{clm:gzero,clm:rbasis} and plugging it into \cref{eq:f_empirical_gradient} we obtain,
\begin{align*}
    \nabla F_S(w)
    =
    \gamma_1 \vbar + \epsilon e_{d+2} + e_i
    ,
\end{align*}
for some $i\in \cIt{t+1}$. 

\subsection{Proof of \cref{cor:spangeneral}} \label{prf:spangeneral_cor}
We assume $w=w_t+q$ with $w_t$ as in \cref{lem:spangeneral} and some $q$ yet to be defined. By standard vector decomposition we know that $\Pi_{t+1}\brk{w}$ can be expressed as
\begin{align} \label{eq:decomp1}
    \Pi_{t+1}\brk{w}
    =
    w_t + \tilde q
    ,
\end{align}
for some $\tilde q$. Observe that,
\begin{align*}
    \norm{\tilde q}_\infty
    &\leq
    \norm{\tilde q}_2
    \tag{$\norm{\cdot}_\infty \leq \norm{\cdot}_2$}\\
    &=
    \norm{\Pi_{t+1}\brk{w} - w_t}_2
    \tag{\cref{eq:decomp1}}\\
    &\leq
    \norm{\Pi_{t+1}\brk{w} - w}_2 + \norm{w - w_t}_2
    \tag{triangle inequality}\\
    &\leq
    2\norm{w - w_t}_2
    \tag{$\norm{\Pi_{t+1}\brk{w} - w}_2\leq \norm{w_t - w}_2$}\\
    &=
    2\norm{q}_2 
    \tag{$w=w_t+q$}\\
    &\leq
    2\sqrt{d+2}\norm{q}_\infty
    \tag{$\norm{u}_2\leq \sqrt{d}\norm{u}_\infty$ for any $u\in\reals^d$}
    .
\end{align*}
Thus, for $\Pi_{t+1}\brk{w}$ to satisfy the conditions of \cref{lem:spangeneral} in \cref{eq:adhering_lem} it suffices that 
\begin{align*}
    \norm{q}_\infty 
    \leq
    \frac{1}{4\sqrt{d}}\min\brk[c]2{\frac{\gamma_2}{3},\frac{\gamma_1\gamma_3}{8dn}}
    .
\end{align*}
In this case, we can apply \cref{lem:spangeneral} on $\Pi_{t+1}(w)$ and get that
\begin{align*}
    \nabla F_S(\Pi_{t+1}(w))
    = 
    \gamma_1\bar{v}+\epsilon e_{d+2}+ e_i
    ,
\end{align*}
for some $i\in\cIt{t+1}$. Denote this index by $i^\star_\Pi$, then by the definition of $\nabla F_S(\Pi_{t+1}(w))$ it holds
\begin{align*}
    i^\star_\Pi
    &=
    \argmax_{i\in\cIt{t+1}}\brk[c]{\gamma_1\vbar+\epsilon e_{d+2}+e_i\dotp \Pi_{t+1}(w) + \sigma_i} \\
    &=
    \argmax_{i\in\cIt{t+1}}\brk[c]{\brk{\gamma_1\vbar+\epsilon e_{d+2}+e_i}\dotp \Pi_{t+1}(w) + \sigma_i}
    ,
\end{align*}
where the last equality is trivial since $\gamma_1\vbar+\epsilon e_{d+2}$ is independent of the argument $i$ we try to maximize. On the other hand, applying again \cref{lem:spangeneral} on $w=w_t+q$ we get,
\begin{align*}
    \nabla F_S(w)
    = 
    \gamma_1\bar{v}+\epsilon e_{d+2}+ e_{i^\star}
    ,
\end{align*}
for some $i^\star\in\cIt{t+1}$. Similarly to $i^\star_\Pi$ we have
\begin{align*}
    i^\star
    &=
    \argmax_{i\in\cIt{t+1}}\brk[c]{\brk{\gamma_1\vbar+\epsilon e_{d+2}+e_i}\dotp w + \sigma_i}
    .
\end{align*}
Observe that by orthogonal vector decomposition we get that $w=\Pi_{t+1}(w)+w^\perp$ were $w^\perp$ is orthogonal to $\Pi_{t+1}(w)$. Therefore, $\brk{\gamma_1\vbar+\epsilon e_{d+2}+e_i}\dotp w^\perp=0$ for any $i\in\cIt{t+1}$. Consequently we have,
\begin{align} \label{eq:istarpiequal}
    i^\star
    =
    i^\star_\Pi
    =
    \argmax_{i\in\cIt{t+1}}\brk[c]{\brk{\gamma_1\vbar+\epsilon e_{d+2}+e_i}\dotp \Pi_{t+1}(w) + \sigma_i}
    .
\end{align}
And this implies that $\nabla F_S(w)=\nabla F_S(\Pi_{t+1}(w))$. In addition, from the proof of \cref{clm:rbasis} in \cref{lem:spangeneral} we deduce that $g_\gamma(w;\alpha)=g_\gamma(\Pi_{t+1}(w);\alpha)=0$ for $\alpha\in S$. Also from \cref{eq:istarpiequal} we know that $r(w)=w_{i^\star}+\sigma_{i^\star}$ and $i^\star=i^\star_\Pi\in\cIt{t+1}$. Therefore,
\begin{align*}
    F_S(w)
    &=
    \frac{1}{n}\sum_{\alpha\in S}g_\gamma(w;\alpha) + \gamma_1\vbar\dotp w + \epsilon e_{d+2}\dotp w + r(w) \\
    &=
    \brk{\gamma_1\vbar + \epsilon e_{d+2} + e_{i^\star_\Pi}}\dotp w +\sigma_{i^\star_\Pi} \\
    &=
    \brk{\gamma_1\vbar + \epsilon e_{d+2} + e_{i^\star_\Pi}}\dotp \Pi_{t+1}(w) +\sigma_{i^\star_\Pi}
    \tag{$w=\Pi_{t+1}(w)+w^\perp$}\\
    &=F_S(\Pi_{t+1}(w))
    .
\end{align*}

\section{Proof of \cref{cl:projection}}\label{prf:projection}

Let $[s_1,\ldots s_k]$ be an orthonormal basis for $\cS_k$, and $[s_{k+1},\ldots, s_d]$ an orthonormal basis of $\cS_k^\perp$ the orthogonal subpace of $\cS_k$, and $S$ be a matrix whose $i$th column is $s_i$. Note that $S$ and $S^\top$ are both orthogonal matrices, i.e., $S^\top S=SS^\top=I$. Now consider the following process of generating an orthogonal matrix $U$: we pick a random orthogonal matrix $\tilde{U}\in \reals^{d_2\times d}$, and then let $U=\tilde{U}S^\top$. Notice that because random orthogonal matrices are invariant to multiplication by an orthogonal matrix then $U$ is indeed a random orthogonal matrix. 

Next, note that the matrix $U\Pi$ determines the first $k$ columns of $\tilde{U}$. Specifically, for each $i\le k$:
\begin{equation}\label{eq:upisi} U\Pi s_i=Us_i=\tilde{U}S^\top s_i = \tilde{U}e_i=\tilde{U}_i.\end{equation}
Also note that
\begin{align*}
\|(1-\Pi)U^\top w\|_{\infty}
&=\|SS^\top (1-\Pi)S \tilde U^\top w\|_{\infty}\\
&\le \|SS^\top (1-\Pi)S \tilde U^\top w\|_{2}\\
&\le \|S^\top (1-\Pi)S \tilde U^\top w\|_{2} \tag{$S$ is orthogonal matrix}\\
&\le \sqrt{d} \|S^\top (1-\Pi)S \tilde U^\top w\|_{\infty}.
\end{align*}

Now we consider the matrix  $Q=\tilde U S^\top (1-\Pi)S$, and we want to calculate its columns. Observe that for $i\le k$ we have that
\[Q_i=Q e_i= \tilde US^\top (1-\Pi)Se_i=\tilde US^\top (1-\Pi)s_i=0.\]
and for $i>k$:
\[Q_i = Qe_i= \tilde US^\top (1-\Pi)Se_i=\tilde US^\top s_i=\tilde{U}e_i =\tilde{U}_i.\]
In particular we have that

\[ (Qw)_i =\begin{cases} 0 & i\le k \\ \tilde{U}_i^\top w & i>k.\end{cases} \]
Thus, conditioning on $U\Pi= V$, we have that for the fixed vector $w$, by \cref{eq:upisi}:
\begin{align*}
     \hspace{32pt}&\hspace{-32pt}
     \Pr\left(\|S^\top (1-\Pi)S \tilde U^\top w\|_{\infty} >\frac{c}{\sqrt{d}} ~\mid ~ \tilde U\Pi=V \right)\\
     &=   
      \Pr\left(\|S^\top (1-\Pi)S \tilde U^\top w\|_{\infty} >\frac{c}{\sqrt{d}} ~\mid ~ \tilde U_i= Vs_i,~ i=1,\ldots, k\right) \\
      &=    \Pr\left(\max_{j> k} \{\|Q^\top w\|_{\infty}\} >\frac{c}{\sqrt{d}} ~\mid ~ \tilde U_i= Vs_i,~ i=1,\ldots, k\right)\\
    &=
    \Pr\left(\max_{j> k} \{|\tilde U_j^\top w|\} >\frac{c}{\sqrt{d}} ~\mid ~ \tilde U_i= Vs_i,~ i=1,\ldots, k\right) \\
    &\le
    \sum_{j=k+1}^{d_2}    \Pr\left(|\tilde U_j^\top w| >\frac{c}{\sqrt{d}} ~\mid ~ \tilde U_i= Vs_i,~ i=1,\ldots, k\right)\\
    &\le
    d_2  \Pr\left(|\tilde U_{k+1}^\top w|\} >\frac{c}{\sqrt{d}} ~\mid ~ \tilde U_i= Vs_i,~ i=1,\ldots, k\right)
\end{align*}
where the last equality is due to symmetry. Since $\tilde{U}_{k+1}$ is sampled uniformly from the sphere orthogonal to $\tilde U_1,\ldots, \tilde{U}_k$ and $w$ is determined by them, we may apply a standard concentration inequality for the inner product of a randomly sampled vector in $\reals^{d_2-k}$ against a constant vector $\|w\|\le 1$~\cite[see, e.g.,][Lemma 2.2]{ball1997elementary}, yielding
\[\Pr\left(|\tilde U_{k+1}^\top w| >\frac{c}{\sqrt{d}} ~\mid ~ \tilde U_i= Vs_i,~ i=1,\ldots, k\right)\le 2e^{-\frac{c^2}{2d}\cdot (d_2-k+1)}.\]
Taking expectation over $V$, we conclude the proof.

\end{document}